\newcommand{\esc}[2]{\langle #1,#2\rangle}
\newcommand{\LL}{\Lambda}
\newcommand{\spa}{\mathrm{span}}
\theoremstyle{plain}
\newtheorem{proposition}{Proposition}
\newtheorem{lemma}[proposition]{Lemma}
\newtheorem{teorema}[proposition]{Theorem}
\newtheorem{coro}[proposition]{Corollary}
\newtheorem*{teo}{Theorem}
\theoremstyle{remark}
\newtheorem{remark}[proposition]{Remark}
\newcommand{\red}{}
\theoremstyle{definition}
\newtheorem{definition}{Definition}[section]
\title{The geometry of canal surfaces and the length of curves in
de Sitter space}
\author{R\'{e}mi Langevin, Gil Solanes\footnote{Second author was supported by FEDER/MEC grant number MTM2006-04353 and the Ram\'on y Cajal program.}
}
\begin{document}\thispagestyle{empty}
\maketitle

\abstract{\red{We find the minimal value of the length in de
Sitter space of closed space-like curves with non-vanishing non-space-like geodesic curvature vector. These curves are in correspondence with closed almost-regular canal
surfaces, and their length is a natural magnitude in conformal geometry. 
As an application, we get a lower bound for the total conformal torsion of closed space curves.}

\section{Introduction}
Consider a smooth one parameter family of spheres in ordinary
3-space given by their centers $m(t)$ and radii $r(t)$. These
spheres admit an envelope, classically called a {\em canal surface},
if $\|m'(t)\|> |r'(t)|$. Otherwise the spheres are nested.  
\begin{figure}[ht]
\begin{center}
\psfrag{car}[][]{Characteristic circle} \psfrag{aa}[][]{space-like
path } \psfrag{bb}[][]{time-like path}
{\centering
\mbox{\includegraphics[width=10.8cm,height=5.3cm]{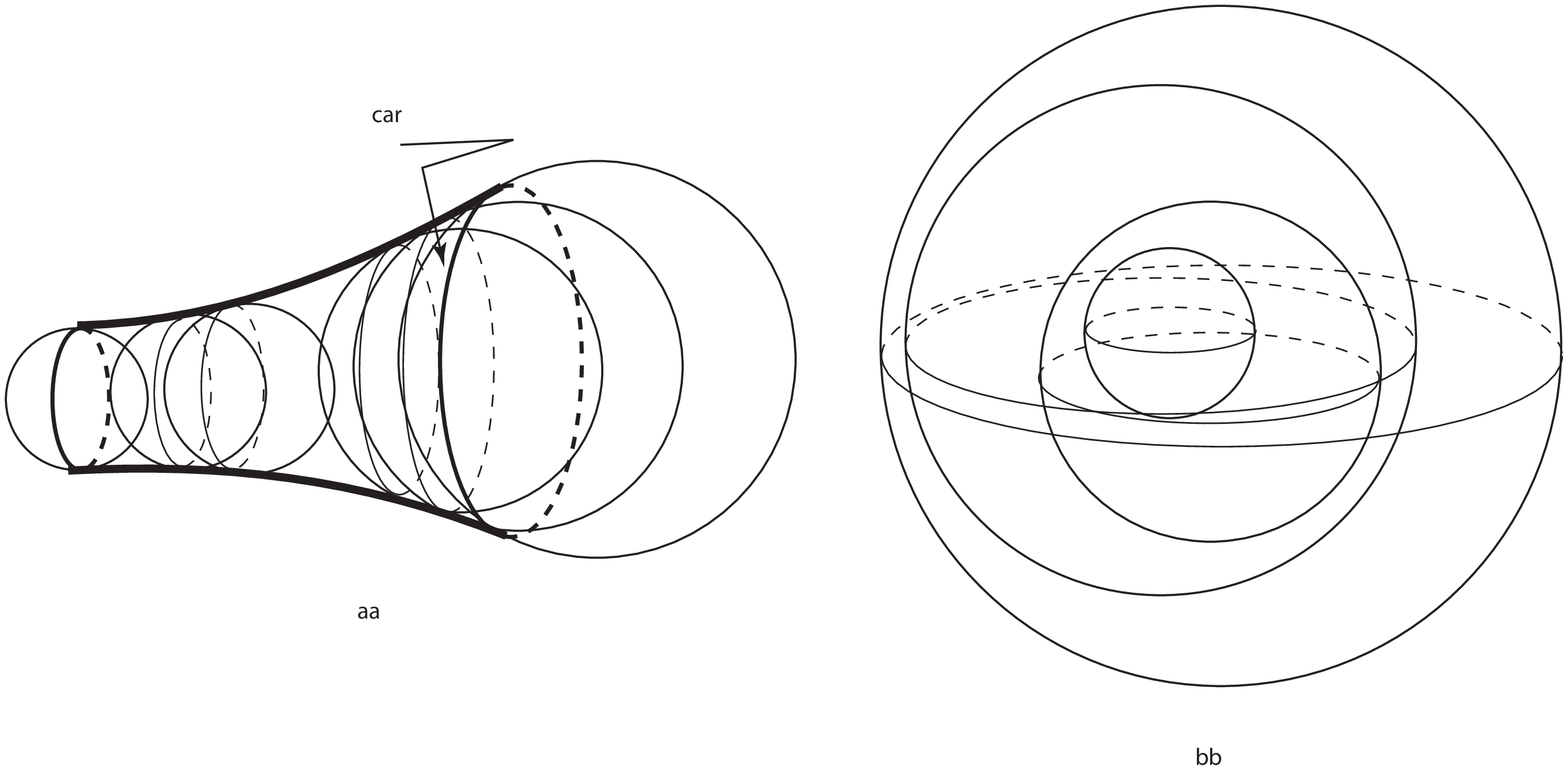}}}
\caption{Spheres corresponding to space-like and  time-like
paths.} \label{type}
\end{center}
\end{figure}
This canal surface is tangent to the
spheres along the so-called \emph{characteristic circles}. These
circles may admit two enveloping curves (possibly coinciding) or
none at all. In case they have no envelope, the canal surface is
immersed, and will be called {\em regular}. Otherwise the surface is
singular along the envelopes of the circles.  {Of special interest is}
the limit case when the two envelopes coincide, and the surface
degenerates along a single curve. 
 In this case, the family of spheres will be called a {\em drill}. 
An example of drill occurs when taking the osculating spheres of a
space curve. 
 In this case, the characteristic circles are the  osculating circles of the curve, so the canal surface is the so-called {\em curvature tube} (cf.\cite{banchoffwhite}). 
 \red{Thomsen (see \cite[\S 8]{Tho}) remarked these are essentially the only examples of drills.}

Back to general families of spheres giving canal surfaces, one can associate them a notion of\emph{ length } in the following way. Given
two nearby spheres of the family (with parameters $t,t'$), consider
the angle $\alpha(t,t')$ between them. Then we take a fine partition
of times $t_1<\ldots<t_n$, and we sum up
$\sum_i\alpha(t_i,t_{i+1})$. The length of the family of spheres is defined to be the supremum of this sum over all partitions.

All these considerations have a conformal invariance in the sense
that they remain unchanged under M\"{o}bius transformations of space.
The space of spheres is naturally endowed with a Lorentz metric
which is invariant under the action of the M\"{o}bius group. This way, it
is isometric to the so-called de Sitter space $\Lambda^4$. It turns out that the
previous discussion is better understood by means of this metric.
Indeed, canal surfaces correspond to curves in the space of
spheres with space-like tangent vector, regular canals are given
by space-like curves with time-like geodesic curvature vector, and  drills correspond to space-like curves with light-like geodesic curvature vector. As for the length described above, it corresponds
precisely to the notion of length associated to the Lorentz metric. Our main result is the following estimate.

\begin{teo}
 Let $\sigma:\mathbb S^1\rightarrow \Lambda^4$ be a closed space-like curve in de Sitter sphere with nowhere vanishing and nowhere space-like geodesic curvature vector. Then the length of $\sigma$ is bigger than or equal to $2\pi$. 
\end{teo}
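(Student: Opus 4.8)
The plan is to pass to the ambient Minkowski space $\mathbb R^{4,1}$, where $\Lambda^4=\{x:\langle x,x\rangle=1\}$ and the tangent space at $x$ is the Lorentzian hyperplane $x^\perp$. Parametrizing $\sigma$ by de Sitter arc length, so that $\langle\sigma,\sigma\rangle=\langle\sigma',\sigma'\rangle=1$ and $\langle\sigma,\sigma'\rangle=0$, I would first record the basic identity for the geodesic curvature vector. Since $\sigma$ is the unit normal of $\Lambda^4$ along the curve, the covariant acceleration is the tangential part of $\sigma''$; differentiating the relations above gives $\langle\sigma'',\sigma\rangle=-1$ and $\langle\sigma'',\sigma'\rangle=0$, hence
\[\kappa_g=\sigma''-\langle\sigma'',\sigma\rangle\,\sigma=\sigma''+\sigma,\qquad \langle\kappa_g,\sigma\rangle=\langle\kappa_g,\sigma'\rangle=0.\]
Thus $\kappa_g$ is a nowhere-zero, nowhere-space-like vector field in $\mathbb R^{4,1}$. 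The set of non-zero non-space-like vectors has two connected components (the two time cones), so by continuity $\kappa_g$ stays in one cone; after fixing the time orientation I may assume it is future-pointing, whence $\langle\kappa_g,a\rangle<0$ for every future-pointing unit time-like $a$.

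I would exploit this through the linear equation $\sigma''+\sigma=\kappa_g$. Assume, for contradiction, $L<2\pi$; then $\tfrac{d^2}{ds^2}+1$ is invertible on $L$-periodic functions, and solving with the periodic Green's function yields
\[\sigma\!\left(s_0+\tfrac L2\right)=\frac{1}{2\sin(L/2)}\int_{-L/2}^{L/2}\cos v\;\kappa_g(s_0+v)\,dv,\qquad s_0\in\mathbb R.\]
Pairing with any future-pointing time-like $a$ and using $\langle\kappa_g,a\rangle<0$ shows that, as long as $\cos v\ge 0$ on the whole interval — that is, when $L\le\pi$ — the left-hand side has strictly negative inner product with \emph{every} such $a$. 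A space-like vector cannot be future causal, and this already forces $L>\pi$.

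To reach the sharp constant $2\pi$ I would recast the estimate as a rotation number. Setting $x=\langle\sigma,a\rangle$ and $x'=\langle\sigma',a\rangle$, the identity $x''+x=\langle\kappa_g,a\rangle$ gives
\[\frac{d}{ds}\arg\bigl(x,x'\bigr)=-1+\frac{x\,\langle\kappa_g,a\rangle}{x^2+(x')^2},\]
so the phase point rotates clockwise at rate $1$ plus a correction of the sign of $-x\,\langle\kappa_g,a\rangle$; integrating over a period yields $L=2\pi\,W(a)+\int_0^L \frac{x\langle\kappa_g,a\rangle}{x^2+(x')^2}\,ds$, where $W(a)\in\mathbb Z$ is the winding number of $(x,x')$, defined for generic $a$ (namely whenever $a\notin\Pi(s)^\perp$ for all $s$, with $\Pi=\mathrm{span}\{\sigma,\sigma'\}$). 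The model curves $\sigma(s)=\bigl(\sqrt{1+c^2}\,(\cos\tfrac{s}{\sqrt{1+c^2}},\sin\tfrac{s}{\sqrt{1+c^2}}),\,c\bigr)$, which have time-like $\kappa_g$ and length $L=2\pi\sqrt{1+c^2}$, turn exactly once, which is what pins the constant to $2\pi$ and shows the bound is sharp but not attained.

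The main obstacle is precisely the control of that correction term: for a single $a$ its sign fails to be definite once $L$ exceeds $\pi$, which is the analytic shadow of the fact that the osculating plane $\Pi(s)$ may tilt. My plan is to average the identity over the hyperbolic space of future-pointing unit time-like directions $a$, so that the future-cone inequality $\langle\kappa_g,a\rangle<0$ yields a definite sign after integration while the integer winding numbers $W(a)$ force the averaged turning to be at least $2\pi$; equivalently, one argues that tilting the osculating plane can only lengthen the curve relative to the planar value. Making this rigorous — choosing the correct invariant measure on directions and verifying the sign of the contribution of $\int \frac{x\langle\kappa_g,a\rangle}{x^2+(x')^2}\,ds$ — is where the real work lies, and the almost-regular case of light-like $\kappa_g$, where $\kappa_g/|\kappa_g|$ degenerates, will need a separate limiting argument.
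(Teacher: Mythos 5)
Your setup and the first contradiction argument are essentially sound, with one correction: the periodic Green's function for $y''+y=f$ on $[0,L]$ is $G(v)=\cos(L/2-|v|)/(2\sin(L/2))$, not $\cos v/(2\sin(L/2))$; this kernel is nonnegative precisely when $L\le\pi$, so your conclusion $L>\pi$ survives. But the theorem claims $2\pi$, and the passage from $\pi$ to $2\pi$ is exactly the part you leave as a plan rather than a proof. The identity $L=2\pi W(a)+\int_0^L x\langle\vec{k}_g,a\rangle\,(x^2+(x')^2)^{-1}\,ds$ is correct, but neither of the two facts you would need from it is established: (i) that $W(a)\ge 1$ for the relevant $a$ --- nothing you have shown rules out $W(a)=0$, in which case the identity gives no lower bound at all; and (ii) that the correction term has a favourable sign after averaging --- the integrand $x\langle\vec{k}_g,a\rangle$ changes sign with $x=\langle\sigma,a\rangle$, you specify no invariant measure on the (infinite-volume) hyperboloid of future directions, and you give no computation showing the averaged correction is nonnegative. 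As it stands, the proposal proves $L>\pi$ and conjectures the rest; it also defers the light-like case of $\vec{k}_g$, which is precisely the equality case the paper characterises.

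For comparison, the paper's route is entirely different and is built to avoid this sign problem. It uses the involute-type paths $\phi_t(s)=\cos(s+t)\,\sigma(s)-\sin(s+t)\,\dot{\sigma}(s)$, whose derivative $-\sin(s+t)(\sigma+\ddot{\sigma})$ is non-space-like by hypothesis; this produces a nested family of spheres joining $\sigma(0)$ to $-\sigma(\pi)$, hence two disjoint balls. A conformal normalisation makes these balls antipodal, forcing the curve of centers to have length at least $2\pi$; a symmetrisation (replacing the curve of centers by a great circle while keeping the radius function) preserves the de Sitter length and the curvature hypothesis; and the resulting problem about canals of $0$-spheres in $\mathbb S^1$ is settled by a convexity and Gauss--Bonnet argument for the envelope of geodesics in the hyperbolic plane. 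If you want to salvage your approach, the missing ingredient is a substitute for that nestedness/normalisation step, which is what actually injects the global topological information; the averaging idea alone does not supply it.
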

Moreover, we characterise the equality case. It corresponds to a family of spheres which are tangent to a common line at a fixed point. In particular the length of a closed regular canal is bigger than $2\pi$. The theorem applies also to drills with non-vanishing curvature vector. As an application, by considering the osculating spheres, we obtain an inequality for the integral of the conformal torsion of space curves.


\bigskip
{This work was done while the second author was visiting the Univerist\'e de Bourgogne. He thanks the Institut de Math\'ematiques de Bourgogne for its hospitality. The authors wish to thank Udo Hertrich-Jeromin for pointing to them the reference \cite{Tho}.}

\section{Preliminaries}
We start by recalling a commonly used model for the conformal
geometry of the sphere $\mathbb{S}^3$ (cf. \cite{Ber, pinkall, Ce,
hertrich}). Indeed, for simplicity we will work in $\mathbb{S}^3$
rather that $\mathbb{R}^3$. Let $\langle\ ,\ \rangle$ be the
Lorentz {bilinear}  form defined on $\mathbb{R}^5$ by
$$ \langle{x} ,{y} \rangle =x_1y_1+x_2y_2+x_3y_3+x_4y_4-x_5y_5, $$
and consider the {\it light cone}
$\mathcal{C}=\{x\in\mathbb{R}^5\,|\,\langle x,x\rangle=0 \}$. We
identify $\mathbb{S}^3$ to the projectivization of $\mathcal{C}$.
This way, every vector $\gamma\in \mathcal{C}\setminus \{0\}$
defines a point $\Gamma=\spa(\gamma)\in \mathbb{S}^3$. {Every
section $\gamma:\mathbb{S}^3\rightarrow\mathcal{C}\setminus \{0\}$
transverse to the fibers defines a Riemannian metric on
$\mathbb{S}^3$, conformally equivalent to the standard one. Metrics
of constant sectional curvature correspond to sections of
$\mathcal{C}$ by hyperplanes. Also, conformal transformations of
$\mathbb{S}^3$ are given by linear endomorphisms of $\mathbb{R}^5$
preserving $\langle\ ,\ \rangle$.}

\begin{figure}
\begin{center}
\psfrag{ssx}{$\Sigma$} \psfrag{lxpur}{ $\sigma^{\bot}$}
\psfrag{xx}{ $\sigma$} \psfrag{lx}{ $\spa{(\sigma)}$}
\psfrag{lamb}{$\Lambda^4$} \psfrag{light}{ light cone}
\psfrag{sinf}{ $\mathbb{S}^3$}
\includegraphics[width=6cm]{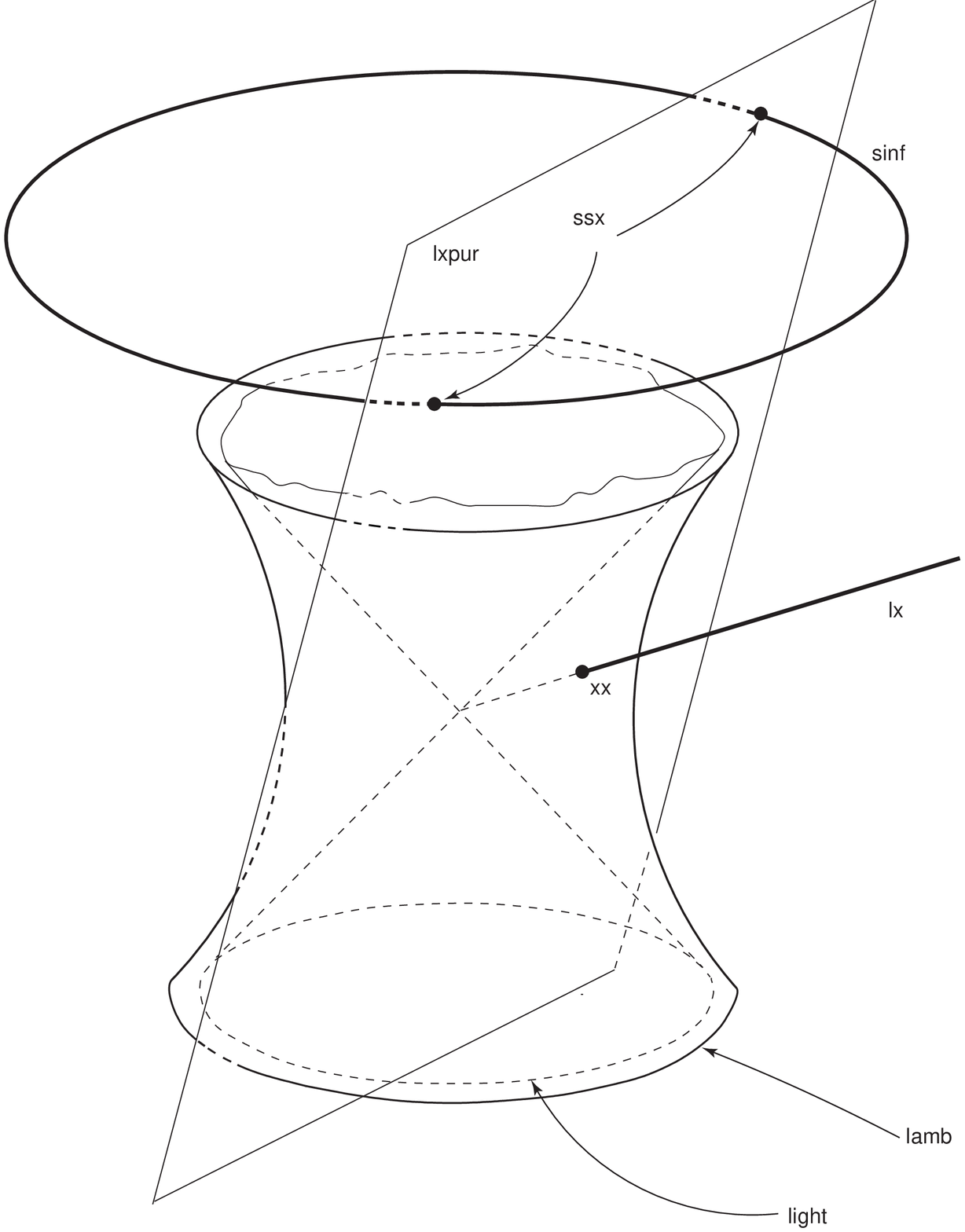}
\end{center}
\caption{$\mathbb{S}^3$ and the correspondence between $\Lambda^4$
and the space of spheres. \label{fig1}}
\end{figure}

The quadric $\Lambda^4=\{x\in\mathbb{R}^5\,|\,\langle x,x\rangle=1
\}$ (sometimes called \emph{de Sitter space}) is identified to the
set of oriented $2$-spheres of $\mathbb{S}^3$. Indeed, every
$\sigma\in\Lambda^4$ defines the sphere
$\Sigma\subset\mathbb{S}^3$ formed by the null lines of
$\sigma^\bot\cap \mathcal{C}$, and an orientation on it ({here and
in the following $\bot$ means} orthogonality with respect to
$\langle\ ,\ \rangle$). The orientation of $\Sigma$ is given by the ball $B_\sigma=\{\spa(\gamma)|\esc{\sigma}{\gamma}>0\}$. For instance, fixing on $\mathbb{S}^3$ the
(standard) metric given by the section $S=\{x\in\mathcal{C}\ |\
x^5=1 \}$, the ball of center $(m_1,\ldots,m_4,1)\in S\equiv
\mathbb{S}^3$ and radius $r\in(0,\pi)$ corresponds to
\begin{equation}\label{centreradi}
\sigma=\frac{1}{\sin r}\left(m_1,\ldots,m_4,\cos
r\right)\in\Lambda^4.
\end{equation}

Given $\sigma_1,\sigma_2\in\Lambda^4$ spanning a space-like plane, the corresponding spheres $\Sigma_1,\Sigma_2$ intersect at the circle $\mathrm{span}(\sigma_1,\sigma_2)^\bot\cap\mathcal C$. A computation using \eqref{centreradi} (cf.\cite[p.39]{hertrich}) shows that the unoriented angle $\alpha\in [0,\pi/2]$ between $\Sigma_1,\Sigma_2$ is given by
\begin{equation}\label{angle}
 \cos\alpha=|\langle\sigma_1,\sigma_2\rangle|.
\end{equation}

\bigskip
We will use the following lemma to find the contact
order between a curve $\Gamma\subset \mathbb{S}^3$,  and a sphere $\Sigma$. {In the following $v^{k)}$
denotes the $k$-th derivative of a parametrized curve $v$.}

\begin{lemma}\label{contacte}
A curve $\Gamma(t)=\spa(\gamma(t))$ has contact of order $\geq k$
with a sphere $\Sigma$ corresponding to $\sigma$ iff
\[
\sigma\bot\spa(\gamma(t),{\gamma}'(t),\ldots,\gamma^{k)}(t))
\]
\end{lemma}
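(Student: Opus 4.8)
The plan is to reduce the contact condition to the vanishing order of a single scalar function and then exploit the freedom in the homogeneous representative $\gamma$. First I would recall that contact order is a purely differential notion: the curve $\Gamma$ has contact of order $\geq k$ with the hypersurface $\Sigma\subset\mathbb S^3$ at $\Gamma(t_0)$ precisely when, for one (equivalently, any) local defining function $g$ of $\Sigma$ with $dg\neq 0$ along $\Sigma$, the composite $g\circ\Gamma$ vanishes at $t_0$ together with its first $k$ derivatives. Independence of the chosen defining function is immediate: two defining functions of a hypersurface differ by a nowhere-vanishing factor, and the Leibniz rule shows that multiplying $g\circ\Gamma$ by a nowhere-vanishing function leaves the order of vanishing unchanged. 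This also frees the notion from any auxiliary metric, which is what makes the statement conformally meaningful.

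Next I would exhibit a concrete defining function. Fixing the standard section $S=\{x\in\mathcal C\,|\,x^5=1\}$, I realise $\mathbb S^3\equiv S$ and write each point $\Gamma(t)$ through its unique representative $s(t)\in S$. The sphere $\Sigma$ is cut out inside $S$ by $g(s)=\langle\sigma,s\rangle$; since $\sigma$ is space-like (so $\sigma\notin\mathcal C$) and $\langle\sigma,s\rangle=0$ on $\Sigma$, a routine check shows $\langle\sigma,\cdot\rangle$ does not vanish on $T_sS$ for $s\in\Sigma$, hence $g$ is a genuine defining function. Because $\sigma$ is constant, $\tfrac{d^j}{dt^j}\langle\sigma,s(t)\rangle=\langle\sigma,s^{j)}(t)\rangle$, so contact of order $\geq k$ at $t_0$ is exactly the vanishing of $\langle\sigma,s^{j)}(t_0)\rangle$ for $j=0,\dots,k$, i.e.\ $\sigma\bot\spa(s,s',\dots,s^{k)})$.

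It then remains to pass from the distinguished lift $s$ to an arbitrary one. Any smooth lift has the form $\gamma(t)=\lambda(t)s(t)$ with $\lambda$ nowhere zero, whence $\langle\sigma,\gamma(t)\rangle=\lambda(t)\langle\sigma,s(t)\rangle$. Applying the Leibniz rule and using that $\lambda$ is invertible, the function $t\mapsto\langle\sigma,\gamma(t)\rangle$ vanishes at $t_0$ together with its first $k$ derivatives if and only if $t\mapsto\langle\sigma,s(t)\rangle$ does. Equivalently, $\sigma\bot\spa(\gamma,\gamma',\dots,\gamma^{k)})$ at $t_0$ iff $\sigma\bot\spa(s,s',\dots,s^{k)})$ at $t_0$, which by the previous step is precisely the contact condition. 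This gives both implications of the lemma.

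I expect the only genuine subtlety to be the bookkeeping around homogeneity: confirming that the orthogonality condition in the statement is independent of the chosen lift $\gamma$ (the Leibniz argument), together with verifying that $g=\langle\sigma,\cdot\rangle$ is nondegenerate along $\Sigma$ so that it legitimately computes the contact order. The differentiations themselves are routine.
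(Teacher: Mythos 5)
Your proof is correct and follows essentially the same route as the paper's: both reduce the contact order to the order of vanishing of the scalar function $t\mapsto\langle\sigma,\gamma(t)\rangle$ on a section of the light cone. The paper states this in two lines; you additionally justify the independence of the defining function, the nondegeneracy of $\langle\sigma,\cdot\rangle$ along $\Sigma$, and the invariance under rescaling the lift $\gamma$, all of which the paper leaves implicit.
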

\begin{proof}
The sphere $\Sigma$ is the zero level of the function $f(x)=\langle
x, \sigma\rangle$ defined on some section of $\mathcal C$. Then the
contact of $\Gamma(t)$ and $\Sigma$ has the order of the zero of
$(f\circ\gamma)(t)=\langle \gamma(t),\sigma\rangle$.
\end{proof}

{A consequence is that $\Gamma(t)$ has contact of order $k$ with a
circle $C=\Sigma_1\cap \Sigma_2$ iff
$\sigma_1,\sigma_2\bot\spa(\gamma(t),{\gamma}'(t),\ldots,\gamma^{k)}(t))$.
Indeed, if $\Gamma(t)$ has $k$-th contact with $\Sigma_1$ and
$\Sigma_2$ then it has the same contact with $C$. To see this, consider a (local) coordinate system
$(x_1,x_2,\theta)$ on $\mathbb{S}^3$ such that $x_1$, $x_2$ are the
distances (with respect to some metric) to $\Sigma_1$ and
$\Sigma_2$, respectively. In these coordinates $\Gamma(t)$ is
written $(x_1(t),x_2(t),\theta(t))$. Then we have
$x_i(0)=x_i'(0)=\ldots=x_i^{k)}(0)=0;\ i=1,2.$ Hence $\Gamma(t)$ has
contact of order $k$ with the curve $(0,0,\theta(t))$ which is just
the circle $C$.}

\begin{definition}
A differentiable curve in $\LL^4$ will be called a {\em path}. A
path $\sigma(t)$ in $\Lambda^4$ is called space-like (resp.
time-like or light-like) if its tangent vector $\sigma'(t)$ is such
that $\langle \sigma'(t),\sigma'(t)\rangle>0$ (resp. $<0$, or $=0$).
\end{definition}
A space-like path has a well defined length element given by $\| \sigma'(t)\|dt$ (we denote $\|v\|=\langle v,v\rangle^{1/2}$ for any space-like vector $v$). This length element corresponds to the infinitesimal angle between nearby spheres of the canal, and thus coincides with the description given in the introduction. Indeed, for small $h$
\[
 \|\sigma(t+h)-\sigma(t)\|^2=2(1-\langle \sigma(t+h),\sigma(t)\rangle)=2(1-\cos\alpha)\sim \alpha^2
\]
where $\alpha$ is the angle between the spheres corresponding to $\sigma(t)$ and ${\sigma(t+h)}$.

\medskip The following discussion is well-known.

\begin{proposition}\label{nested}
Let $\sigma(t)$ be a \emph{space-like} path in $\Lambda^4$. Then the
corresponding family of spheres $\Sigma(t)$ admits an envelope,
which is generated by the characteristic circles
$\mathrm{span}(\sigma(t),\sigma'(t))^\bot\cap \mathcal{C}$.  If the
path $\sigma(t)$ is \emph{time-like}, then the spheres $\Sigma(t)$
are nested {(disjoint)}. {If the path is \emph{light-like}, and
$\sigma'(t)\neq 0$, then the spheres are disjoint unless
$\spa(\sigma'(t))$ is constant.}
\end{proposition}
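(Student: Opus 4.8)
The plan is to derive everything from the two differential identities obtained by differentiating the constraint $\langle\sigma,\sigma\rangle\equiv 1$, namely $\langle\sigma,\sigma'\rangle=0$ and $\langle\sigma,\sigma''\rangle=-\langle\sigma',\sigma'\rangle$, together with the angle formula \eqref{angle} and the signature bookkeeping of $(\mathbb{R}^5,\langle\ ,\ \rangle)$. For the space-like statement, note first that $\sigma$ and $\sigma'$ are orthogonal space-like vectors, so the plane $P(t)=\spa(\sigma(t),\sigma'(t))$ is positive definite; hence $P(t)^\bot$ has signature $(2,1)$ and $P(t)^\bot\cap\mathcal C$ projects to a genuine circle $C(t)\subset\mathbb S^3$, the characteristic circle. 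It lies on $\Sigma(t)$ because $C(t)\subset\sigma(t)^\bot$, and it is the limit as $h\to0$ of the intersections $\Sigma(t)\cap\Sigma(t+h)=\spa(\sigma(t),\sigma(t+h))^\bot\cap\mathcal C$, since $\spa(\sigma(t),\sigma(t+h))\to P(t)$.

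To confirm that $S=\bigcup_t C(t)$ is the envelope I would parametrise $C(t)$ as $\gamma(t,s)$ and show $S$ is tangent to $\Sigma(t)$ all along $C(t)$. The generator $\partial_s\gamma$ is tangent to $\Sigma(t)$ by construction; for the other generator, differentiating $\langle\gamma(t,s),\sigma(t)\rangle\equiv0$ in $t$ yields $\langle\partial_t\gamma,\sigma(t)\rangle=-\langle\gamma,\sigma'(t)\rangle=0$, the last equality precisely because $\gamma\in P(t)^\bot$. Thus both generators of $TS$ lie in $T\Sigma(t)$, which is exactly the envelope condition.

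For the time-like case I would Taylor expand and use $\langle\sigma,\sigma''\rangle=-\langle\sigma',\sigma'\rangle$ to get $\langle\sigma(t),\sigma(t+h)\rangle=1-\tfrac{h^2}{2}\langle\sigma',\sigma'\rangle+O(h^3)$; since $\langle\sigma',\sigma'\rangle<0$ this exceeds $1$ for small $h\neq0$, so by \eqref{angle} the spheres admit no real angle and are disjoint, and as the value is $>+1$ the balls are in fact nested. The light-like case is handled the same way but is more delicate: now $\langle\sigma',\sigma'\rangle=0$ kills the quadratic term, and differentiating it once more gives $\langle\sigma',\sigma''\rangle=0$, so $\sigma''\in\sigma'^\bot$. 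The structural point is that, $\sigma'$ being null in the Lorentzian $\mathbb R^5$, the form on the hyperplane $\sigma'^\bot$ is positive semi-definite with radical $\spa(\sigma')$; hence $\langle\sigma'',\sigma''\rangle\geq0$, with equality iff $\sigma''\in\spa(\sigma')$. Carrying the expansion to fourth order (all lower coefficients now vanish) gives $\langle\sigma(t),\sigma(t+h)\rangle=1+\tfrac{h^4}{24}\langle\sigma'',\sigma''\rangle+O(h^5)$. When $\sigma''\notin\spa(\sigma')$ this is $>1$ and the spheres are disjoint; the only escape is $\sigma''\in\spa(\sigma')$ for all $t$, which is exactly the condition that $\spa(\sigma'(t))$ be a fixed null line, and then integrating shows $\langle\sigma(t),\sigma(t')\rangle\equiv1$, so the spheres are mutually tangent rather than disjoint.

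I expect the main obstacle to be this light-like case: one has to notice that both the quadratic and cubic terms of the expansion vanish, push it to fourth order, and — the crucial step — establish the sign $\langle\sigma'',\sigma''\rangle\geq0$ from the semi-definiteness of $\langle\ ,\ \rangle$ on the null hyperplane $\sigma'^\bot$. That sign is precisely what forces the dichotomy and identifies the exceptional configuration $\spa(\sigma')=\mathrm{const}$.
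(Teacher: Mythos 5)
Your treatment of the space-like case is essentially the paper's: the characteristic circle $C(t)$ comes from the signature count on $\spa(\sigma,\sigma')^\bot$, and tangency of the swept surface to $\Sigma(t)$ follows by differentiating $\langle\gamma,\sigma\rangle\equiv 0$ and using $\gamma\bot\sigma'$. That part is correct and matches the paper's argument.

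The gap is in the time-like and light-like cases. Your Taylor expansion of $\langle\sigma(t),\sigma(t+h)\rangle$ only shows that \emph{infinitesimally nearby} spheres are disjoint, whereas the proposition asserts that the family is nested, i.e.\ $\Sigma(t_1)\cap\Sigma(t_2)=\emptyset$ for \emph{every} pair $t_1\neq t_2$; knowing $\langle\sigma(t_1),\sigma(t_2)\rangle>1$ for $t_2$ near $t_1$ does not prevent this quantity from returning to $[-1,1]$ for distant parameters, and the expansion gives no control there. The problem is sharper in the light-like case: if $\sigma''(t_0)\in\spa(\sigma'(t_0))$ at an isolated $t_0$ (so that $\spa(\sigma')$ is \emph{not} constant), your fourth-order coefficient vanishes at $t_0$ and the jet computation is inconclusive even locally, so the stated dichotomy ``disjoint unless $\spa(\sigma')$ is constant'' is not actually established by the expansion. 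The paper sidesteps both issues with a global argument: for a fixed null $v\neq 0$ the function $\xi\mapsto\langle\sigma'(\xi),v\rangle$ has constant (weak) sign along a nonvanishing causal field, vanishing only where $\sigma'(\xi)\in\spa(v)$, so $\langle\sigma(t_2)-\sigma(t_1),v\rangle=\int_{t_1}^{t_2}\langle\sigma'(\xi),v\rangle\,\mathrm{d}\xi\neq 0$ except in the exceptional case; hence $\spa(\sigma(t_1),\sigma(t_2))^\bot\subset(\sigma(t_2)-\sigma(t_1))^\bot$ contains no null line and the spheres are disjoint. A minor further point: you invoke \eqref{angle} to pass from $|\langle\sigma_1,\sigma_2\rangle|>1$ to disjointness, but that formula is stated only for spheres that meet; the correct justification is that $|\langle\sigma_1,\sigma_2\rangle|>1$ forces $\spa(\sigma_1,\sigma_2)$ to be Lorentzian, so its orthogonal complement is positive definite and misses $\mathcal C\setminus\{0\}$.
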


\begin{proof}
{Let $\sigma'(t)$ be space like, then $\langle
\sigma(t),\sigma(t)\rangle=1$ yields $\langle
\sigma(t),\sigma'(t)\rangle=0$. Thus,
$\spa(\sigma(t),\sigma'(t))^\bot$ intersects the light cone
transversely, and defines a circle $C(t)$ in $\mathbb{S}^3$. To
check that the spanned surface is tangent to the spheres, consider a
curve $\Gamma(t)=\spa(\gamma(t))$ with $\Gamma(t)\in C(t)$ for all
$t$; i.e., $\gamma(t)\bot\spa(\sigma(t),\sigma'(t))$. Then \[
\langle\gamma,\sigma\rangle\equiv 0\quad\Rightarrow\quad
\langle\gamma',\sigma\rangle+\cancel{\langle
\gamma,\sigma'\rangle}=0,\] so by Lemma \ref{contacte}, the curve
$\Gamma(t)$ is tangent to $\Sigma(t)$.}
{Let now $\sigma'(t)\neq 0$ be time-like or light-like. Given two
spheres $\Sigma(t_1),\Sigma(t_2)$ we have \[
{\sigma(t_2)-\sigma(t_1)=\int_{t_1}^{t_2}\sigma'(\xi)\mathrm{d}\xi.}\]
{Then for every light-like vector $v\neq 0$} \[
{\langle\sigma(t_2)-\sigma(t_1),v\rangle=\int_{t_1}^{t_2}\langle\sigma'(\xi),v\rangle\mathrm{d}\xi\neq
0}
\]{unless $\sigma'(\xi)\subset\mathrm{span} (v)$.
Except this case,   } $\spa(\sigma(t_1),\sigma(t_2))^\bot
\subset(\sigma(t_2)-\sigma(t_1))^\bot$ does not intersect
$\mathcal{C}\setminus \{0\}$. Hence,
$\Sigma(t_1)\cap\Sigma(t_2)=\emptyset$; i.e., we have a family of
disjoint spheres.}
\end{proof}

\bigskip
Classically, a surface obtained as the envelope of a family of spheres is called a {\em canal surface}. Here we will call  {\em canal} {both a space-like
path and the envelope} of the corresponding one-parameter family of
spheres. This envelope may or not have singular points.

\begin{definition}
A canal $\sigma:(a,b)\rightarrow\Lambda^4$ is called {\em regular} if an immersion of the  cylinder $f:(a,b)\times \mathbb S^1\rightarrow \mathbb S^3$ exists such that $f(t,\cdot)$ maps $\mathbb S^1$ to the characteristic circle $\mathrm{span}(\sigma(t),\sigma'(t))^\bot\cap \mathcal{C}$ for every $t\in(a,b)$.
\end{definition}
These canals have been also called {\em  elliptic} in the literature (cf.\cite{pinkall}). This notion is equivalent to a second order condition on $\sigma(t)$.

\begin{proposition}[cf.\cite{pinkall}]\label{condition}
A canal $\sigma(t)$ in $\Lambda^4$ is regular if and only if the subspace
$\,\spa(\sigma(t),\sigma'(t),\sigma''(t))$ intersects transversely
the light cone.\end{proposition}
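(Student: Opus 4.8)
The plan is to reduce regularity to a pointwise non-degeneracy condition along the characteristic circles, and then to translate that condition into a signature statement about $V:=\spa(\sigma,\sigma',\sigma'')$. Write $W(t)=\spa(\sigma(t),\sigma'(t))^\bot$, so that each characteristic circle is $\mathbb P(W(t)\cap\mathcal C)$; note $W(t)$ is Lorentzian of signature $(2,1)$, since $\spa(\sigma,\sigma')$ is space-like. Choose a smooth null lift $\gamma(t,\theta)$ of the circles, with $f(t,\theta)=\spa(\gamma(t,\theta))$ and $\gamma(t,\cdot)$ parametrizing $W(t)\cap\mathcal C$. Since $T_{\spa(\gamma)}\mathbb S^3=\gamma^\bot/\spa(\gamma)$, the map $f$ is an immersion at $(t,\theta)$ exactly when $\gamma,\partial_t\gamma,\partial_\theta\gamma$ are linearly independent.

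The key computation is to show this independence is equivalent to $\esc{\gamma}{\sigma''}\neq 0$. First I would record that $\gamma,\partial_\theta\gamma$ span the null $2$-plane $\gamma^\bot\cap W$ tangent to the cone $W\cap\mathcal C$ at $\gamma$, while differentiating $\esc{\gamma}{\sigma}=0$ and $\esc{\gamma}{\sigma'}=0$ gives $\esc{\partial_t\gamma}{\sigma}=0$ and $\esc{\partial_t\gamma}{\sigma'}=-\esc{\gamma}{\sigma''}$. Since $\esc{\partial_t\gamma}{\gamma}=0$ always holds, the three vectors are dependent iff $\partial_t\gamma\in W$, i.e. iff $\partial_t\gamma\bot\sigma'$, i.e. iff $\esc{\gamma}{\sigma''}=0$. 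As the condition $\esc{\gamma}{\sigma''}\neq0$ is insensitive both to the scaling of $\gamma$ and to the choice of parametrization, it follows that the canal admits an immersion $f$ iff $\esc{\gamma}{\sigma''}\neq 0$ for every nonzero $\gamma\in W(t)\cap\mathcal C$ and every $t$.

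Next I would observe that, because $\gamma\in W=\spa(\sigma,\sigma')^\bot$, the equation $\esc{\gamma}{\sigma''}=0$ is the same as $\gamma\bot V$; hence the singular points of the canal are precisely the null directions of $V^\bot$, and regularity is equivalent to $V^\bot\cap\mathcal C=\{0\}$. It remains to identify this with the transversality of $V$ and $\mathcal C$. Using that $\spa(\sigma,\sigma')\subset V$ is space-like, I would split into the three signatures of the $W$-component $w$ of $\sigma''$: if $w$ is time-like, then $V$ is Lorentzian of signature $(2,1)$, $V^\bot$ is positive definite, $V\cap\mathcal C$ is a genuine circle met transversally, and there are no singular points; if $w$ is light-like, then $V$ is degenerate and tangent to $\mathcal C$ (a drill, with a single double enveloping curve); if $w$ is space-like, then $V$ is positive definite, $V^\bot$ is Lorentzian and meets $\mathcal C$, producing two enveloping curves. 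Thus $V$ meets the light cone transversally in a circle precisely when $V^\bot\cap\mathcal C=\{0\}$, which is exactly the regularity condition obtained above.

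The main obstacle I anticipate is the bookkeeping in the immersion step — correctly passing to the quotient $\gamma^\bot/\spa(\gamma)$ and verifying that the differential of $f$ drops rank only when $\partial_t\gamma\in W$ — together with the delicate point of the last paragraph: transversality must be read as the $3$-plane genuinely cutting $\mathcal C$ in a circle (the Lorentzian case), so that the merely non-degenerate but space-like case of signature $(3,0)$, where $V\cap\mathcal C=\{0\}$, is correctly excluded and seen to correspond to a non-regular canal.
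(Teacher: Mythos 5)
The paper offers no proof of this proposition --- it is imported from \cite{pinkall} --- so your argument has to be judged on its own terms, and it holds up. The reduction of regularity to the pointwise condition $\esc{\gamma}{\sigma''}\neq 0$ on a null lift is the right computation: the identities $\esc{\partial_t\gamma}{\sigma}=0$ and $\esc{\partial_t\gamma}{\sigma'}=-\esc{\gamma}{\sigma''}$, together with $\partial_t\gamma\bot\gamma$, show that the differential of $f$ drops rank exactly when $\partial_t\gamma$ falls into the null plane $\gamma^\bot\cap W=\spa(\gamma,\partial_\theta\gamma)$, and since the vanishing of $\esc{\gamma}{\sigma''}$ is insensitive to the choice of lift and to fibrewise reparametrization, this characterizes the existence of \emph{some} immersion, not merely the regularity of your particular one. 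The translation to ``$V^\bot$ contains no nonzero null vector'' and the signature trichotomy on the $W$-component $w$ of $\sigma''$ are also correct (you should add the trivial subcase $w=0$, i.e.\ $\sigma''\in\spa(\sigma,\sigma')$, which behaves like the space-like case since $V^\bot=W$ is then Lorentzian), and you rightly flag that ``transverse'' must be read as $V$ being a genuine Lorentzian $3$-plane cutting $\mathcal C$ in a cone over a circle, so that the positive definite case with $V\cap\mathcal C=\{0\}$ is excluded. For comparison, the paragraph following the proposition in the paper suggests a shorter packaging of the same facts: in the arc-length parametrization $V=\spa(\sigma,\dot\sigma,\vec{k}_g)$ with $\vec{k}_g=\sigma+\ddot\sigma$ orthogonal to the space-like plane $\spa(\sigma,\dot\sigma)$ (indeed your $w$ equals $\langle\sigma',\sigma'\rangle\,\vec{k}_g$), so transversality of $V$ is exactly time-likeness of $\vec{k}_g$ and positive definiteness of $V^\bot$; your route spends more effort but has the merit of actually verifying, rather than quoting, that the singular set of the enveloped surface is the projectivization of the null directions of $V^\bot$.
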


Consider $s$ the arc-length parameter on the curve $\sigma(s)$. We
will use dots to indicate derivative with respect to $s$; so for
instance $\langle \dot{\sigma}(s),\dot{\sigma}(s)\rangle=1$. The de Sitter sphere admits an invariant affine connection obtained by Lorentz orthogonal projection of the standard connection of $\mathbb R_1^4$. This way, the
\emph{geodesic curvature vector} $\vec{k}_g(s)$ of $\sigma(s)$ in
$\Lambda^4$ is the orthogonal projection of
$\ddot{\sigma}(s)$ to $T_{\sigma(s)}\Lambda^4=\sigma(s)^\bot$.
Now,
\[
\langle \sigma,\sigma\rangle\equiv 1\Rightarrow
\langle\sigma,\dot{\sigma}\rangle\equiv 0\Rightarrow
\esc{\sigma}{\ddot{\sigma}}+\esc{\dot{\sigma}}{\dot{\sigma}}=0
\]
so that $\esc{\sigma}{\ddot{\sigma}}=-1$. Therefore
$\vec{k}_g=\sigma+\ddot{\sigma}$, and the previous proposition
states that a canal is regular if and only if its geodesic
curvature vector in $\Lambda^4$ is time-like.

%
%

It will be interesting to consider a slightly bigger class of canals.

\begin{definition}\label{almost} A canal $\sigma(s)$ is {\em almost regular} if $\vec{k}_g(s)=\sigma(s)+\ddot{\sigma}(s)$
is non vanishing, and nowhere space-like; i.e.,
\[
\langle \vec{k}_g(s),\vec{k}_g(s)\rangle \leq 0,\qquad
\vec{k}_g(s)\neq 0.
\]A canal with everywhere light-like  $\vec k_g(s)$ is called a {\em drill}.
\end{definition}

\begin{figure}[htb]
\begin{center}
{\centering
\mbox{\includegraphics[width=7cm,height=6cm]{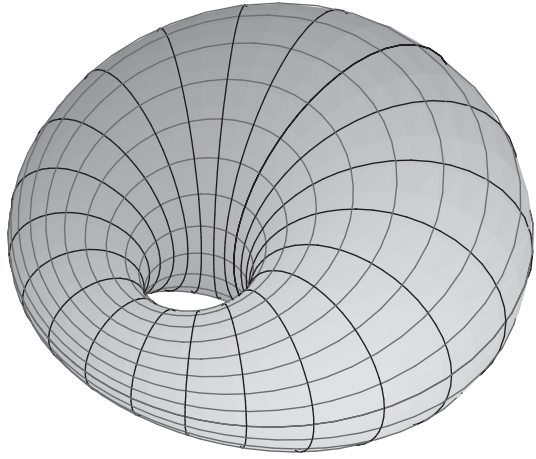}}}
\end{center}
\caption{A Dupin cyclide and its two families of characteristic
circles}
\end{figure}

\section{Examples}\label{examples}
Particular  canal surfaces are the {\em Dupin cyclides}, obtained by intersecting $\Lambda^4$ with space-like affine planes. A reference with our viewpoint
is \cite{La-Wa}. An older one is \cite[livre IV chap XII page
281]{Dar}, where Darboux observes that a Dupin cyclide is in two
different ways the envelope of a one parameter family of spheres,
and that each sphere of one family is tangent to all the spheres
of the other.  Each of these two families is the intersection of $\Lambda^4$ with a space-like affine plane.

\red{ A { regular
Dupin cyclide} (in $\mathbb{S}^3$) is the stereographic image of a
revolution torus of $\mathbb{R}^3$. In this case the two affine planes $H_1$ and $H_2$ are of the form 
$H_1 = x_1 +h_1$, $H_2 = x_2+h_2$, where $h_1$ and $h_2$ are two space-like orthogonal vectorial planes, and $x_1\in H_1$ and $x_2\in H_2$ are two time-like points in $(h_1\oplus h_2)^\bot$ such that $\esc{x_1}{x_2}=1$.
Points $\sigma$ of the intersection $H_i\cap \Lambda^4$ satisfy the equation $\langle \sigma,\sigma \rangle = 1$. Therefore $\langle \sigma-x_i,\sigma-x_i\rangle = \langle \sigma,\sigma \rangle - \langle x_i, x_i \rangle = 1  + |\langle x_i, x_i \rangle|$. This means that the intersection $H_i\cap \Lambda^4$ is, in the euclidean plane $H_i$ (for the metric induced from the Lorentz metric on $\mathbb{R}^4_1$) a circle of radius strictly larger than one. Thus, the two canals have length bigger than $2\pi$.}

\red{A Dupin cyclide may also have two singular points. This happens when one of the family of spheres is of the form $H\cap \Lambda^4$, where $H$ is an affine plane  of the form 
$H = x +h$,  with $h$  a  space-like  vectorial plane, and $x\in H$ a space-like vector  orthogonal to $h$.  Therefore $\langle \sigma-x, \sigma-x \rangle = \langle \sigma,\sigma \rangle - \langle x,x \rangle = 1  - |\langle x,x \rangle|$.
This means that the intersection $H\cap \Lambda^4$ is, in the euclidean plane $H$ (for the metric induced from the Lorentz metric on $\mathbb{R}^4_1$) a circle of radius strictly smaller than one. This radius can be arbitrarily small, so there is no lower bound for the length of such canals.}

\medskip
An example of drill is the  limit case of cyclide shown in picture
(\ref{limcy}). 
\begin{figure}[htb]
\begin{center}
{\centering
\mbox{\includegraphics[width=7cm]{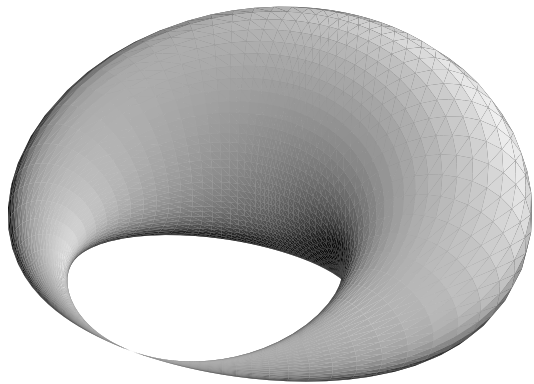}}}
\end{center}
\caption{A degenerate Dupin cyclide \label{limcy}}
\end{figure}
\red{In that case one family of spheres (the exterior
ones) is of the form $H\cap \Lambda^4$, where $H$ is a space-like affine plane  of the form 
$H = x +h$,  where $h$ is a  space-like  vectorial plane, and $x\in H$ a light-like vector orthogonal to $h$.  Therefore $\langle \sigma-x, \sigma-x\rangle= \langle \sigma, \sigma \rangle - \langle x, x\rangle = 1$.
This means that the intersection $H\cap \Lambda^4$ is, in the euclidean plane $H$ (for the metric induced from the Lorentz metric on $\mathbb{R}^4_1$) a circle of radius equal to  one. A degenerate case, for which $\vec{k}_g \equiv 0$  are space-like
geodesics: pencils of spheres containing a given circle.} 

{Both space-like geodesics and Dupin cyclides with one singular point have length $2 \pi$. A more general family with the same
length is the following. Let $u,v,w$ be orthogonal vectors with
$\langle u,u\rangle=0$, $\langle v,v\rangle=\langle w,w\rangle=1$.
Then}\begin{equation}\label{minimes}{\sigma(s)=\lambda(s)u+\cos
sv+\sin s w\quad s\in[0,2\pi]}\end{equation} {for any $2\pi$-periodic
function $\lambda(s)>0$ defines a closed drill with length $2\pi$,
and geodesic curvature vector
$\vec{k}_g(s)=(\lambda(s)+\lambda''(s))u$.
 The corresponding
spheres are tangent to a circle at a fixed point. 

\begin{remark}\label{reciproc}
Reciprocally,
any canal $\sigma(s)$ with light-like geodesic curvature vector of the form
$\vec{k}_g(s)=\rho(s)u$ can be obtained in this way. Indeed, it is
enough to solve the equation $\lambda(s)+\lambda''(s)=\rho(s)$.
Then $\sigma(s)$ has the form (\ref{minimes}) by the uniqueness of
solutions of linear ordinary differential equations.
\end{remark}

\smallskip

The main goal of this paper is to find lower bounds for the length of closed space-like paths.  The example of singular Dupin cyclides shows that some extra geometric hypothesis is needed. We will prove that closed  almost-regular canals have length bigger than $2\pi$. Moreover, it will be seen that equality occurs precisely in the example we just described.

\section{Length of canals}
The conformal geometry of space curves and surfaces was intensively studied at the beginning of 20th century by Blaschke, Thomsen and others, always from a local viewpoint. Next we consider a global
question, namely that of estimating the length in $\Lambda^4$ of
closed canals. 
\begin{figure}
\begin{center}
\psfrag{sig}{$\sigma(s)$}
\psfrag{sigmoins}{$-\sigma(s)$}
\psfrag{phits}{$\phi_t (s)$}
\includegraphics[width=8cm]{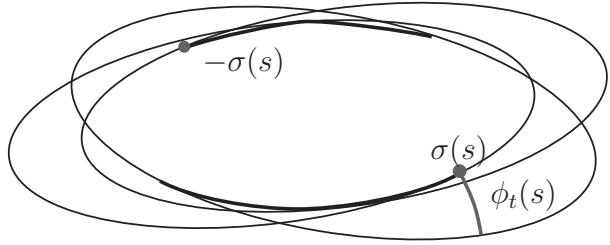}
\caption{Surface spanned by tangent geodesics in $\Lambda^4$ \label{lamba_involute}}
\end{center}
\end{figure}

Let $\sigma(s)$ be an almost regular canal parametrized by arc-length. It will be useful to consider the surface $S\subset\Lambda^4$ spanned by the union of geodesics of the form
$\spa(\sigma(s),\dot{\sigma}(s))\cap\Lambda^4$. This corresponds
to the set of spheres containing the characteristic circles
$C(s)=\spa(\sigma(s),\dot{\sigma}(s))^\bot\cap\mathbb{S}^3$.
Considering the map
\[
\psi(s,u)=\cos u\ \sigma(s)-\sin u\ \dot{\sigma}(s)
\]
we can check that $S$ is a regular surface outside  $\pm \sigma(s)=
\psi(s,n\pi)$, $n\in\mathbb Z$. Now for every $t$ consider the
following path in $\Lambda^4$
\begin{equation}\label{canal}
 \phi_t(s):=\psi(s,s+t)=\cos
(s+t) \sigma(s)-\sin (s+t)\dot{\sigma}(s)
\end{equation}
Taking derivatives,
\begin{equation}\label{derivada}
\phi'_t(s)=\frac{\mathrm{d}\phi_t(s)}{\mathrm{d}s}=-\sin(s+t)(\sigma(s)+\ddot{\sigma}(s))
\end{equation}
so that $\langle \phi'_t,\phi'_t\rangle\leq 0$. The surface $S$ is analogous to the developable surface spanned by the tangent lines of a space curve, while the curves $\phi_t(s)$ mimic the involutes.

Now we prove a couple of technical lemmas. The first one is about
hyperbolic plane geometry.

\begin{lemma}
Let $P,Q,R$ be three points on a geodesic circle in the hyperbolic
plane $\mathbb{H}^2$. Consider the closed convex curve formed by the
geodesic segments $RP$, $RQ$ and an arc of circle joining $P$ and
$Q$. Let $\alpha$, $\beta$, $\gamma$ be the interior angles of
that curve at $P$, $Q$ and $R$ respectively. Then
$\alpha+\beta+\gamma>\pi$.
\end{lemma}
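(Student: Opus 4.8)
The plan is to apply the Gauss--Bonnet formula to the convex region $\Omega$ bounded by the curve. Write $O$ and $r$ for the center and radius of the geodesic circle, so its geodesic curvature is $\kappa=\coth r>1$. The two geodesic sides contribute no geodesic curvature, the arc contributes $\int k_g\,ds=\kappa L$ (with $L$ its length), and the exterior angles are $\pi-\alpha,\pi-\beta,\pi-\gamma$. Since $\Omega\subset\mathbb{H}^2$ has constant curvature $-1$, Gauss--Bonnet gives $\alpha+\beta+\gamma=\pi-\mathrm{Area}(\Omega)+\kappa L$, so the claim $\alpha+\beta+\gamma>\pi$ is equivalent to $\kappa L>\mathrm{Area}(\Omega)$. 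Peeling off the circular segment $\Lambda$ bounded by the arc and the chord $PQ$, for which Gauss--Bonnet reads $\kappa L-\mathrm{Area}(\Lambda)=2\delta$ with $\delta$ the tangent--chord angle at $P$ (equal to the one at $Q$ by the reflection symmetry of the chord), and using $\mathrm{Area}(\Omega)=\mathrm{Area}(PQR)+\mathrm{Area}(\Lambda)$, the whole statement reduces to the clean inequality
\[
2\delta>\mathrm{Area}(PQR),
\]
where $PQR$ is the geodesic triangle.

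Next I would compute $\delta$ from the isoceles triangle $OPQ$. Its apex angle $\angle POQ$ equals the central angle $2\phi$ of the arc we use, and its base angles are equal, so summing angles yields $\angle OPQ=\tfrac\pi2-\phi-\tfrac12\mathrm{Area}(OPQ)$. Because the tangent to the circle at $P$ is orthogonal to the radius $OP$, the tangent--chord angle is $\delta=\tfrac\pi2-\angle OPQ=\phi+\tfrac12\mathrm{Area}(OPQ)$, that is $2\delta=2\phi+\mathrm{Area}(OPQ)$.

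It then remains to compare $\mathrm{Area}(PQR)$ with $2\phi+\mathrm{Area}(OPQ)$. Let $2\phi,2\psi,2\chi$ be the central angles of the three arcs subtended by the sides opposite $R,P,Q$, so that $\phi+\psi+\chi=\pi$. In the principal configuration, where $O$ lies inside $PQR$, one has $\mathrm{Area}(PQR)=\mathrm{Area}(OPQ)+\mathrm{Area}(OQR)+\mathrm{Area}(ORP)$, and the target becomes $\mathrm{Area}(OQR)+\mathrm{Area}(ORP)<2\phi$. This follows from the area--defect identity: each isoceles area equals $\pi$ minus its apex angle minus its two strictly positive base angles, whence $\mathrm{Area}(OQR)<\pi-2\psi$ and $\mathrm{Area}(ORP)<\pi-2\chi$, and the sum is $<2\pi-2(\psi+\chi)=2\phi$.

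The main obstacle is the configuration bookkeeping. First one must justify that convexity forces the arc \emph{not} containing $R$ (the one bulging away from $R$), and fix the sign of $\delta$ so that it is \emph{added} to, rather than subtracted from, the geodesic angles at $P$ and $Q$. Second, the decomposition of $\mathrm{Area}(PQR)$ around $O$ acquires a minus sign for any side across which $O$ falls outside the triangle, i.e. whenever the corresponding arc is major; this requires a short case analysis. In each case the inequality collapses, after using $\phi+\psi+\chi=\pi$, to the positivity of a difference of base angles of two isoceles triangles, which holds because the base angle is strictly decreasing in the apex angle. A useful check is the limit $r\to\infty$, where the base angles tend to $0$ and $2\delta-\mathrm{Area}(PQR)\to 0$: the estimate is sharp, and strictness for finite $r$ comes precisely from the strict positivity of the base angles.
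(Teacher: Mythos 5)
Your argument is correct in outline, but it takes a much longer road than the paper does. The paper's entire proof is the exact identity $\gamma=\pi-\epsilon-\varphi$, where $\epsilon=\pi-\alpha$ and $\varphi=\pi-\beta$: these exterior angles are precisely the tangent--chord angles of the chords $RP$ and $RQ$ (measured on the arcs $RP$, $RQ$ not containing the third point), and since the reflection in the perpendicular bisector of a chord is an isometry preserving the circle, each of these angles reappears at $R$, where together with $\gamma$ they tile the straight angle along the tangent line to the circle at $R$. Hence $\epsilon+\gamma+\varphi=\pi$ and $\alpha+\beta+\gamma=3\pi-2\epsilon-2\varphi=\pi+2\gamma>\pi$, with no Gauss--Bonnet, no areas, and no case analysis. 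Your route --- Gauss--Bonnet on $\Omega$, peeling off the circular segment to reduce to $2\delta>\mathrm{Area}(PQR)$, and then comparing with the fan of isoceles triangles at the center $O$ --- is sound: I checked the principal case and a representative non-principal one (center outside the triangle across $QR$, and the major-arc case for $PQ$, where the sign changes in $2\delta=2\phi\pm\mathrm{Area}(OPQ)$ and in the area decomposition cancel), and each does collapse to the strict monotonicity of the base angle in the apex angle, as you predict. What you lose is exactly what you flag yourself: the configuration bookkeeping (which arc, which sign of $\delta$, where $O$ sits) is genuine residual work in your approach, whereas the paper's tangent--chord identity makes all of it invisible; what you gain is the quantitative byproduct $\kappa L-\mathrm{Area}(\Omega)=\alpha+\beta+\gamma-\pi=2\gamma$, which the synthetic proof also yields but which your method would produce in any constant-curvature setting where the inscribed-angle symmetry is less obvious. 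If you keep your version, you should finish the case analysis explicitly; as it stands it is a correct plan rather than a complete proof.
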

\begin{proof}
Let $\epsilon=\pi-\alpha$ and $\varphi=\pi-\beta$. Then clearly
$\gamma=\pi-\epsilon-\varphi$. Now
\[
\alpha+\beta+\gamma=2\pi-\epsilon-\varphi+\gamma=\pi+2\gamma>\pi.\qedhere
\]
\end{proof}

\begin{figure}
\begin{center}
\psfrag{aa}[][]{$\alpha$}
\psfrag{bb}[][]{$\beta$}
\psfrag{cc}[][]{$\gamma$}
\psfrag{eps}{$\epsilon$}
\psfrag{S1inf}{$S^1_{\infty}$}
\psfrag{phi}{$\varphi$}
\psfrag{PP}{$P$}
\psfrag{QQ}{$Q$}
\psfrag{RR}{$R$}
{\centering \mbox{\includegraphics[width=6cm,height=6cm]{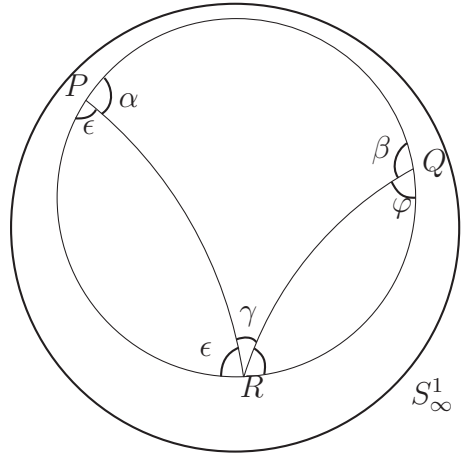}}}
\caption{A lemma of hyperbolic geometry; (Poincar\'{e} model)}
\end{center}
 \end{figure}

Next we bound the length of a regular canal in $\mathbb S^1$. Such an object is a space-like curve $\sigma(t)\in \Lambda^2$, with time-like curvature vector, contained in the $2$-dimensional de Sitter sphere $\Lambda^2$, the unit sphere of $3$-dimensional Lorentz space $\mathbb R_1^2$. As in higher dimension, each $\sigma\in\Lambda^2$ defines an oriented codimension $1$ sphere (i.e. a $0$-sphere in $\mathbb S^1$). By mapping isometrically $\mathbb R_1^2$ into a linear subspace $V\subset\mathbb R_1^4$ one has  $\Lambda^2= V\cap \Lambda^4$. After a suitable stereographic projection, the envelope (in $\mathbb{R}^3$) of a regular canal $\sigma(t)$ in $\Lambda^2$ is a revolution surface. To see that, note that every $\sigma(t)$ corresponds to a sphere orthogonal to the circle $S=V^\bot\cap\mathcal C$. By choosing the pole of the stereographic projection in $S$, the spheres become orthogonal to a line, and are thus enveloped by a surface of revolution. 

\begin{lemma}
Let $\sigma(t)\in \Lambda^2$ with $t\in[0,1]$ be a regular
canal of $\,0$-spheres in $\mathbb{S}^1$ given by their centers
$m(t)\in\mathbb{S}^1$ and their radii $r(t)>0$. Assume {the curve
$m(t)$ has length equal or bigger than $2\pi$}. Assume further
that $r(t)$ is minimum at $t=0,1$ with $r'(0)=r'(1)=0$. Then the
length $\ell$ of $\sigma(t)$ is {strictly} bigger than $2\pi$.
\end{lemma}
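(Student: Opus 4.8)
The plan is to transfer the problem from the Lorentzian geometry of $\Lambda^2$ to the Riemannian hyperbolic plane $\mathbb{H}^2=\{x\in\mathbb{R}^2_1:\langle x,x\rangle=-1\}$, where Gauss--Bonnet is available. This detour is essentially forced: the de Sitter length of a space-like curve admits no naive lower bound (it can be shrunk by pushing the curve towards the light cone), so the hypotheses must be spent converting $\ell$ into a genuinely Riemannian quantity. Each $\sigma(t)\in\Lambda^2$ is dual to the geodesic $g(t)=\sigma(t)^\bot\cap\mathbb{H}^2$, whose two endpoints on $\partial\mathbb{H}^2=\mathbb{S}^1$ are precisely the two points of the $0$-sphere $\Sigma(t)$. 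Since $\sigma$ is a regular canal, the family $\{g(t)\}$ envelopes an immersed curve $\Gamma$ in $\mathbb{H}^2$ --- this is exactly the profile of the surface of revolution discussed above --- and $\sigma(t)$ is the unit normal $N(t)$ of $\Gamma$ at the contact point $\Gamma(t)$, viewed as a space-like unit vector of $\mathbb{R}^2_1$.

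First I would record the identity relating the two lengths. Writing $\sigma=\frac{1}{\sin r}(\cos\theta,\sin\theta,\cos r)$ as in \eqref{centreradi}, a direct computation gives $\langle\sigma',\sigma'\rangle=(\theta'^2-r'^2)/\sin^2 r$, and after the substitution $\cosh d=1/\sin r$ (so that $d$ is the signed distance from the base point to $g(t)$ and $r'=-\sin r\,d'$) this becomes
\[
 \ell=\int_0^1\sqrt{\cosh^2 d\;\theta'^2-d'^2}\,dt,
\]
which exhibits $(\theta,d)$ as coordinates on $\Lambda^2$ with Lorentz metric $\cosh^2 d\,d\theta^2-dd^2$. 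Equivalently, differentiating $\sigma=N$ along $\Gamma$ with the Frenet equations of $\mathbb{H}^2$ gives $N'=-k_g\,T$, hence $\|\sigma'\|=|k_g|$ and
\[
 \ell=\int_\Gamma|k_g|\,du,
\]
the total absolute geodesic curvature of $\Gamma$. The regular-canal hypothesis means $\sigma'\neq0$, i.e. $k_g$ never vanishes, so $\Gamma$ is locally strictly convex and $\ell$ equals the total turning of its tangent. (The constant-radius case $r\equiv r_0$ is a good check: there $\Gamma$ is the circle of radius $d_0$, $\coth d_0$ its curvature, and $\ell=2\pi\cosh d_0=2\pi/\sin r_0$, consistent with both formulas.)

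The core of the argument is then a Gauss--Bonnet estimate. The hypothesis that the centre curve $m(t)$ has length $\geq2\pi$ says precisely that the feet of the geodesics $g(t)$ wind once around $\mathbb{S}^1$, so the tangent direction of $\Gamma$ turns by at least $2\pi$ and $\Gamma$ encircles the base point. The endpoint conditions --- $r$ minimal and $r'=0$, equivalently $d$ maximal and $d'=0$ at $t=0,1$ --- place $\Gamma(0),\Gamma(1)$ at the maximal distance $d_{\max}$ with tangent perpendicular to the radius. I would close $\Gamma$ by the arc of the geodesic circle of radius $d_{\max}$ joining $\Gamma(1)$ to $\Gamma(0)$, obtaining a closed, locally convex curve bounding a region $\Omega$ of positive hyperbolic area. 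Gauss--Bonnet in $\mathbb{H}^2$ (curvature $-1$) then reads, schematically, $\int_\Gamma k_g\,du=2\pi+\mathrm{Area}(\Omega)-(\text{closing-arc and corner terms})$, and the previous lemma --- the circular triangle has angle sum $>\pi$ --- is exactly what bounds the corner angles at $\Gamma(0),\Gamma(1)$ and the circular closing arc, forcing the correction to keep $\ell$ strictly above $2\pi$.

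The main obstacle is precisely this closing-up step: $\sigma$, and hence $\Gamma$, is only an arc, so the closed Gauss--Bonnet formula does not apply directly, and the entire force of the boundary hypotheses is used to complete $\Gamma$ into a closed curve with strictly positive, controlled corner contributions --- this is where the hyperbolic circular-triangle lemma is indispensable, and where the strict inequality is produced. A secondary point to dispatch is that only $\int_0^1|\theta'|\,dt\geq2\pi$ is assumed, so $\theta$ (hence $m$) may backtrack; one must verify that such backtracking cannot lower the total turning below $2\pi$, which follows from the convexity of $\Gamma$ together with the monotone dependence of $d$ on $r$.
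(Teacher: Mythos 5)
Your reduction to the hyperbolic plane is exactly the paper's: you dualize each $\sigma(t)$ to the geodesic $\sigma(t)^\bot\cap\mathbb H^2$, pass to the envelope $C$, and establish the key identity $\ell=\int_C\kappa\,du$ together with local convexity of $C$ (the paper derives local convexity and regularity of $C$ from Proposition \ref{condition} rather than from $\sigma'\neq0$, but the content is the same). The coordinate computation $\langle\sigma',\sigma'\rangle=(|m'|^2-r'^2)/\sin^2r$ also matches. Up to that point the proposal is sound and faithful to the paper.

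The gap is in the closing-up step, which is where the whole inequality is actually produced, and your version of it does not work as sketched. You close $C$ by the arc of the bounding circle $\partial D$ and write Gauss--Bonnet ``schematically'' as $\int_\Gamma k_g\,du=2\pi+\mathrm{Area}(\Omega)-(\text{closing-arc and corner terms})$. Note the sign: the closing-arc term enters with a \emph{minus}, so this identity gives an \emph{upper} bound for $\int_\Gamma k_g$, not a lower one. Concretely, a hyperbolic circle of radius $d$ has geodesic curvature $\coth d$ and an arc of angular extent $\theta$ contributes $\theta\cosh d$ to the curvature integral; this is not bounded by $2\pi$, nor a priori by $2\pi+\mathrm{Area}(\Omega)$, so nothing forces $\int_\Gamma k_g>2\pi$. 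Moreover the circular-triangle lemma (about a point $R$ and two \emph{geodesic} segments $RP$, $RQ$) has no role in your construction, since you introduce no point $R$ and no geodesic sides; invoking it to ``bound the closing arc'' is not an argument. The paper's route is structurally different at exactly this point: (i) it first uses the turning hypothesis to extract a \emph{double point} of $C$, so that the relevant rotation index is $2$ and Gauss--Bonnet produces $4\pi+F$ on the right-hand side (your formula has only $2\pi$); (ii) it reduces, by replacing arcs with doubly tangent geodesic segments (which only shortens $\ell$), to a curve with a single double point; (iii) it closes up with two \emph{geodesic} segments $PR$, $QR$, which contribute nothing to the curvature integral, so the only price is the corner defect $3\pi-(\alpha+\beta+\gamma)$, and the triangle lemma $\alpha+\beta+\gamma>\pi$ makes that price strictly less than $2\pi$. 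Your sketch is missing all three of these ingredients --- the double point / index $2$, the double-tangent reduction, and the zero-curvature closure --- and without them the Gauss--Bonnet bookkeeping points the wrong way.
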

\begin{proof}
The canal $\sigma(t)$ defines a one parameter family of geodesics
$\sigma(t)^\bot\cap\mathbb{H}^2$ in hyperbolic plane
$\mathbb{H}^2{=\{x\in\mathbb R_1^2\ |\ \langle x,x\rangle=-1\}}$.
The envelope of these geodesics is given by
\[
c(t)=\frac{1}{\sqrt{-\langle\vec{k}_g,\vec k_g\rangle}}\cdot\vec{k}_g\in \mathbb{H}^2.
\]
Indeed, one easily
checks that $c(t),c'(t)\bot \sigma(t)$. The points $c(t)$
form a regular curve $C\subset\mathbb{H}^2$. Indeed, taking $v(t)=\sigma'(t)/\|\sigma'(t)\|$ one has an orthonormal basis $\sigma(t),v(t),c(t)$. Assuming $c'(t)=0$ for some $t$ easily yields $v'(t)\bot c(t)$. Hence $c\bot\spa(\sigma,v,v')=\mathrm{span}(\sigma,\sigma',\sigma'')$, so the latter subspace is space-like, contradicting Proposition \ref{condition}. 


Since $\sigma(t)$ is a unit normal vector of $C$ at $c(t)$, 
the geodesic curvature of $C$ in $\mathbb{H}^2$ is
$\kappa(t)=\frac{\|\sigma'(t)\|}{\|c'(t)\|}$ and $C$ is locally convex. Thus, the length of
$\sigma$ is the integral of the geodesic curvature of $C$
\[
\ell=\int_C \kappa(t)\|c'(t)\|\mathrm{d} t=\int_C \kappa(t(u))\mathrm{d} u,
\]
where $u$ is the arc-length on $C$.

It will be useful to project $\mathbb H^2$ onto the space-like affine plane $H$ defining the metric on $\mathbb S^1$, and to study the curve $C$ in this instance of the Klein (or projective) model. This curve is given by equations
\[
 x\cdot m(t)=r(t),\qquad x\cdot m'(t)=r'(t),
\]
where $\cdot$ denotes the euclidean scalar product of $H$. Since $r(t)$ is minimum at $t=0,1$ and $r'(0)=r'(1)=0$, the envelope curve $C$ is contained in some disk $D$, and tangent to $\partial D$ at the endpoints. Since $C$ is locally convex, the total turn of its tangent equals the length of $m(t)$, and by assumption is not smaller than $2\pi$. Hence, $C$ must have at least one double tangent, and one double point.  Next we reduce to the case where
$C$ has one single double point (and one doubly tangent geodesic).
Let $t_0$ be the second $t$ such that the geodesic given by
$\sigma(t_0)^\bot$ is tangent to $C$ in more than one point (see
figure \ref{envelope}). Replace an arc of $C$ by the (maximum)
interval of $\sigma(t_0)^\bot$ defined by the contact points. This
way we get a new curve $C'$, and a new canal with a shorter length.
By repeating this procedure with the rest of double tangents, we
arrive at a curve $C'$ with one only double tangent.

\begin{figure}[ht]
\begin{center}
\psfrag{sigto}{$\sigma(t_0 )^{\bot}$}
\psfrag{S1inf}{$S^1_{\infty}$}
\epsfig{file=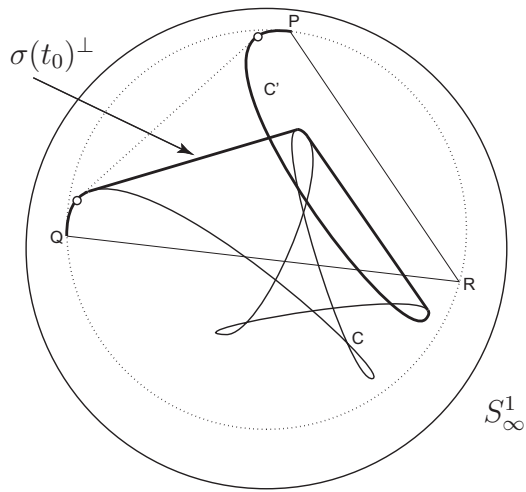,width=0.5\linewidth}%
\caption{Simplifying the curve $C$; (Klein model)\label{envelope}}
\end{center}
\end{figure}

By assumption, $C'$ is contained in a disk of $\mathbb{H}^2$,  and
it is tangent to the boundary at the endpoints $P$, and $Q$. We
can choose $R$ in the boundary of the disk so that the double
point of $C'$ is contained in the angular sector $PRQ$. Adding the
geodesic segments $PR$ and $QR$ to the curve $C'$ we get a locally
convex curve $C''$ with interior angles $\alpha$, $\beta$,
$\gamma$ as in the previous lemma. By applying the Gauss-Bonnet
formula to $C''$ we get
\[
\int_{C'} k(u)\mathrm{d}u +\pi-\alpha+\pi-\beta+\pi-\gamma=4\pi +F
\]
where $F$ is the area of the region enclosed by $C''$, counted
with multiplicity the winding number with respect to $C''$.
Applying the previous lemma gives
\[
\int_{C'} k(u)\mathrm{d}u =\pi+\alpha+\beta+\gamma +F>2\pi+F>2\pi
\]
since the winding numbers are never negative.
\end{proof}


\begin{teorema}\label{main} Let
$\sigma(s)$ be a closed  almost regular canal in $\Lambda^4$. Then
it has length $\ell \geq 2\pi$. {Moreover $\ell=2\pi$ if and only
if $\spa(k_g(s))$ is light-like, and constant.}
\end{teorema}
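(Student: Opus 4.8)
My plan is to reduce the four-dimensional statement to the two-dimensional lemma proved just above, using a fixed ``probe'' circle to cut the family of spheres down to a one-parameter family of $0$-spheres, and to read the equality case off the explicit family \eqref{minimes}. I would first record the model for equality: if $\spa(\vec{k}_g(s))$ is light-like and constant, then Remark~\ref{reciproc} puts $\sigma$ in the normal form \eqref{minimes}, whose length is exactly $2\pi$. The content of the theorem is then that this is the only way to reach the value $2\pi$, so after this observation the goal is the strict inequality $\ell>2\pi$ in every other case.

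The heart of the argument is the genuinely regular case, where $\vec{k}_g$ is everywhere time-like (equivalently $\spa(\sigma,\dot\sigma,\ddot\sigma)$ is space-like, cf.\ Proposition~\ref{condition}). Fix a circle $\gamma\subset\mathbb S^3$; intersecting every sphere $\Sigma(s)$ with $\gamma$ produces a one-parameter family of $0$-spheres of $\gamma\cong\mathbb S^1$, that is, a path $\bar\sigma(s)$ in the totally geodesic $\Lambda^2=V\cap\Lambda^4$ attached to $\gamma$, with $\bar\sigma$ the normalized orthogonal projection of $\sigma$ onto $V$. I would then establish two facts: \textbf{(a)} this projection does not increase the de Sitter length of a space-like arc, so the length of $\bar\sigma$ is at most $\ell$; and \textbf{(b)} for a suitable $\gamma$ the path $\bar\sigma$ is a closed regular canal in $\Lambda^2$ whose centre curve $m(t)$ has length $\geq 2\pi$ and whose radius can be arranged to attain a minimum with vanishing derivative at the endpoints of the cut-open arc. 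Granting \textbf{(a)} and \textbf{(b)}, the two-dimensional lemma yields $\mathrm{length}(\bar\sigma)>2\pi$, whence $\ell>2\pi$. Note that no strictness is lost here, precisely because the two-dimensional lemma is already strict -- and it is inside its proof that the hyperbolic-plane lemma enters.

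The main obstacle is exactly the verification of \textbf{(a)} and \textbf{(b)}. For \textbf{(a)} one must check, with the indefinite metric, that radial projection onto a totally geodesic $\Lambda^2$ keeps space-like curves space-like and is $1$-Lipschitz, the Lorentzian analogue of nearest-point projection onto a subsphere being $1$-Lipschitz. For \textbf{(b)} the delicate input is the lower bound $\geq 2\pi$ for the length of $m(t)$: this is a Fenchel-type statement that I would extract from the regularity (time-like $\vec{k}_g$) together with the geometry of the ruled surface $S$ and its involutes \eqref{canal}--\eqref{derivada}, after choosing $\gamma$ generically so that $\bar\sigma$ inherits the regularity of $\sigma$ and is cut open at a radius minimum. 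Pinning down that a single good probe circle realizes all of these conditions simultaneously is the crux of the whole proof.

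To pass from regular canals to general almost-regular ones, allowing light-like arcs of $\vec{k}_g$, I would approximate $\sigma$ by regular canals obtained by tilting $\vec{k}_g$ slightly into time-like directions; since length is continuous and the regular estimate gives $\geq 2\pi$, the limit yields $\ell\geq 2\pi$ in all cases. The equality statement is then a rigidity analysis: tracing where each inequality above can be an equality forces the probing projection in \textbf{(a)} to be a local isometry and the two-dimensional canal in \textbf{(b)} to degenerate (to a space-like geodesic or a one-singular-point cyclide), which in turn forces $\vec{k}_g$ to be light-like with constant span, i.e.\ $\sigma$ of the form \eqref{minimes}. This closes the characterization $\ell=2\pi\iff\spa(\vec{k}_g)$ light-like and constant.
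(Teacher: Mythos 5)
Your high-level architecture (reduce to the two-dimensional lemma, use Remark~\ref{reciproc} and the normal form \eqref{minimes} for equality) matches the paper, but the reduction mechanism you propose breaks at step \textbf{(a)}, and that gap is fatal to the chain of inequalities. Normalized orthogonal projection onto a totally geodesic $\Lambda^2=V\cap\Lambda^4$ is \emph{not} length non-increasing in Lorentz signature: writing $\sigma=\rho\,\bar\sigma+w$ with $w\in V^\bot$ space-like and $\rho=\sqrt{1-\langle w,w\rangle}\le 1$, one gets $\langle\bar\sigma',\bar\sigma'\rangle=\bigl(\langle\sigma',\sigma'\rangle-\rho'^2-\langle w',w'\rangle\bigr)/\rho^2$, and the factor $1/\rho^2\ge 1$ can dominate. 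Concretely, $\sigma(s)=\rho\cos(s/\rho)e_1+\rho\sin(s/\rho)e_2+\sqrt{1-\rho^2}\,e_3$ with $V=\spa(e_1,e_2,e_5)$ is a closed space-like curve of length $2\pi\rho<2\pi$ whose normalized projection to $\Lambda^2$ has length $2\pi$; so $\ell\ge\mathrm{length}(\bar\sigma)$ fails, and your inequality points the wrong way. Moreover \textbf{(b)} quietly contains the real content of the theorem: the lower bound $\ge 2\pi$ for the length of the centre curve is exactly where the paper's key idea lives, and "a Fenchel-type statement extracted from the ruled surface $S$" is not a proof of it.

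What the paper actually does differs in both places. For the centre curve it uses the involute $\phi_0(s)=\cos s\,\sigma(s)-\sin s\,\dot\sigma(s)$ from \eqref{canal}: by \eqref{derivada} its velocity is a nonzero multiple of $\vec k_g$ on $(0,\pi)$, hence non-space-like, so by Proposition~\ref{nested} the spheres $\phi_0(s)$ are nested from $\sigma(0)$ to $-\sigma(\pi)$; thus the balls of $\sigma(0)$ and $\sigma(\pi)$ are disjoint, and a M\"obius transformation places their centres antipodally, forcing the centre curve to have length $\ge 2\pi$. For the reduction to $\Lambda^2$ it does not project but \emph{symmetrizes}: keep the radii $r(t)$ and replace the centre curve $m(t)$ by an arc-length parametrized great circle $\widehat m(t)$. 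The identity $\langle\sigma',\sigma'\rangle=(|m'|^2-r'^2)/\sin^2r$ from \eqref{centreradi} shows this preserves length \emph{exactly}, and a direct computation gives $\langle\widehat{\vec k}_g,\widehat{\vec k}_g\rangle=-a^2/(1-r'^2)^3<0$ whenever $\vec k_g\neq 0$, so the symmetrized canal is automatically regular even when $\sigma$ is only almost regular. This makes your approximation step (tilting $\vec k_g$ to be time-like, which anyway needs an argument that closedness can be preserved, and which loses strictness in the limit) unnecessary: the strict inequality of the two-dimensional lemma applies in one pass to every almost regular canal outside the excluded normal form.
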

\begin{proof}
{By Remark \ref{reciproc} we must only prove
that $\ell>2\pi$ if $\spa(\vec{k}_g(s))$ is time-like or
non-constant (say in a neighborhood of $s=0$)}. We will reduce
{this} case to the previous lemma. The procedure is similar to the one used in \cite{pinkall} to prove the Willmore conjecture for
canal surfaces. The first step is to find a conformal image of the
corresponding family of spheres $\Sigma(s)$ so that the curve of the
centers has length $2\pi$ or more.  Consider now the path
$\phi_0(s)$ defined in (\ref{canal}) for all $s\in\mathbb{R}$ (we
think of $\sigma(s)$ as an $\ell$-periodic function defined on
$\mathbb{R}$). By (\ref{derivada}), the tangent vector $\phi_0'(s)$
is a non zero multiple of $\vec{k}_g(s)$ for $s\in(0,\pi)$. Thus, by
Proposition \ref{nested}, $\phi_0(s)$ describes a family of nested
spheres going from $\phi_0(0)=\sigma(0)$ to
$\phi_0(\pi)=-\sigma(\pi)$. Then the oriented spheres $\sigma(0)$
and $\sigma(\pi)$ determine two balls in $\mathbb{S}^3$ that are
disjoint. After a conformal transformation of $\mathbb S^3$, these
two balls will have antipodal centers, and the curve $m(s)$ of the
centers of $\Sigma(s)$ will have length $2\pi$ or more.

Let us now parametrize the curve of centers by its arc-length
parameter $t$ starting at the center of a sphere of minimal
radius. We get a curve $m(t)$ in $\mathbb{S}^3$ with $\|m'(t)\|=1$
and $t\in[0,L]$ for the length $L\geq 2\pi$. We will also denote
by $\Sigma(t)$ and $r(t)$ the spheres and their radii. Consider
now $\widehat{m}(t)$,  a unit speed parametrization of a geodesic
circle of $\mathbb{S}^3$. Consider the canal $\widehat{\Sigma}(t)$
described by the spheres with center $\widehat{m}(t)$,  and radius
$r(t)$ equal to the radius of $\Sigma(t)$. By using
(\ref{centreradi}), the length of the derivative of $\sigma(t)$
with respect to $t$ is given in terms of its centers and radii by
(cf.\cite{pinkall})
\[
\langle
\sigma'(t),\sigma'(t)\rangle=\frac{|m'(t)|^2-r'(t)^2}{\sin^2
r(t)}=\frac{1-r'(t)^2}{\sin^2 r(t)} =
\langle\widehat{\sigma}'(t),\widehat{\sigma}'(t)\rangle
\]
Hence the path $\sigma(t)$ has the same length as the path
$\widehat{\sigma}(t)$ corresponding to the canal
${\widehat{\Sigma}}(t)$.

Let us show that the curve ${\widehat{\sigma}}(t)$ has
(non-space-like) non-zero curvature vector
$\vec{\widehat{k}}_g(t)$. By using again (\ref{centreradi}), as
well as
$$
\sigma''=\frac{\langle\sigma'',\sigma'\rangle}{\langle\sigma',\sigma'\rangle}\sigma'+
\langle\sigma',\sigma'\rangle\ddot{\sigma},
$$
the geodesic curvature vector of $\sigma(t)$ can be computed to be
\[
\vec{k}_g=\sigma+\ddot{\sigma}=\frac{1}{(1-r'^2)^2\sin
r}\left(a,b\, m -ar'\sin r\,m'+(1-r'^2)\sin^2 r\,m''\right )
\]
where $a=(1-r'^2)\cos r-r''\sin r$ and $b=1-r'^2-r''\sin r\cos r$.
For $\widehat{\sigma}$ we have $\widehat{m}''=- \widehat{m}$, and
\[
\vec{\widehat{k}}_g=\frac{a}{(1-r'^2)^2\sin
r}\left(1,\widehat{m}\cos r+\widehat{m}'r'\sin r\right),
\]
so that
\[
\langle
\vec{\widehat{k}}_g,\vec{\widehat{k}}_g\rangle=\frac{-a^2}{(1-r'^2)^3}<0
\]
since $\vec{k}_g\neq 0$ gives $a\neq 0$. Finally we apply the
previous lemma to $\widehat{\sigma}(t)$.
\end{proof}

\begin{remark}
 It is clear from the proof that the same result holds for curves in de Sitter sphere of any dimension.
\end{remark}

\section{Osculating spheres}
Given a curve $\Gamma\subset \mathbb{S}^3$ corresponding to $\gamma
\subset \Lambda^4$, the set of spheres having second order contact
with the curve at $\Gamma(t)$ is the geodesic
$\spa(\gamma,\gamma',\gamma'')^\bot\cap\Lambda^4$ (see Lemma
\ref{contacte}), which corresponds to the pencil of spheres
containing the osculating circle. At least one of these spheres has
a third order contact with $\Gamma$. Indeed,
$\spa(\gamma,\gamma',\gamma'',\gamma''')^\bot$ is not time-like and
must intersect $\Lambda^4$. The spheres having third order contact
with a curve are known as {\em osculating} spheres.

\begin{remark}Some relations between the conformal geometry
of the curve, and  the centers and  radii of the osculating spheres
were found in \cite{sanabria}. \red{Here we focus} in the
interpretation of this canal of oculating spheres as a curve in $\Lambda^4$.
\end{remark}

%

\begin{definition}
A point  is a \emph{vertex} of a curve $\Gamma(t)$ if the the
osculating circle has third order contact with $\Gamma(t)$ at that
point.
\end{definition}
Equivalently, vertices are points where the osculating sphere is not
unique. Performing a stereographic projection
$\psi:\mathbb{S}^3\rightarrow\mathbb{R}^3$, the vertices of
$x(t)=(\psi\circ\Gamma)(t)$ are points where
\[
k'^2(u)+k^2(u)\tau^2(u)=0.
\]
where $k,\tau$ are the curvature and torsion of the curve $x(u)$,
and derivation is taken with respect to the arc-length parameter $u$
of $x(u)$. This can be checked from Bouquet's formula
\begin{multline*}
 x(u+h)-x(u)=\left(h-\frac{k^2(u)h^3}{6}\right )T(u)\\+\left(\frac{k(u)h^2}{2}+\frac{k'(u)h^3}{6}\right)N(u)+\frac{k(u)\tau(u)h^3}{6}B(u)+O(h^4),
\end{multline*}
where $T,N,B$ is the Frenet frame. The same formula yields
the following expression for the center $m(u)$ of the osculating
sphere of $x(u)$ at a non-vertex point
\[
m(u)=x(u)+\frac{1}{k(u)}\vec{n}(u)+\frac{k'(u)}{k^2(u)\tau(u)}\vec{b}(u),
\]
where $\vec{n},\vec{b}$ are the principal normal and binormal
vectors.

By the strong transversality  principle (cf. \cite{arnold}),
vertex-free curves are generical in the sense that they form an
open dense subset in the $C^\infty$ topology.

\begin{remark}
A different definition of the vertices of a space curve can be found in the literature. In
some works vertices are points where the osculating sphere has
contact 4 (cf.\cite{uribe}). There are many examples of vertex-free
curves (in our sense) having contact 4 with its osculating sphere.
For instance, one can consider (non closed) plane curves.
\end{remark}

\begin{definition}
A point where $\Gamma$ has contact 4 (or higher) with a sphere is
called a {\em spherical} point of the curve.
\end{definition}

\bigskip
An unoriented osculating sphere of a curve
$\Gamma(t)=\spa(\gamma(t))$ {(with an arbitrary regular parameter
$t$)} is given by
$\pm\sigma\in\spa(\gamma(t),{\gamma}'(t),{\gamma}''(t),$
${\gamma}'''(t))^{\bot}\cap\Lambda^4$. Clearly vertices of
$\Gamma(t)$ occur when
$\gamma(t),{\gamma}'(t),{\gamma}''(t),{\gamma}'''(t)$ are linearly
dependent. Otherwise the unique  osculating sphere is given (up to
orientation) by
\[
\sigma=\frac{\nu}{\pm\sqrt{\langle \nu,\nu\rangle}}
\]
where
$\nu={\gamma\wedge{\gamma}'\wedge{\gamma}''\wedge{\gamma}'''}$ is
the Lorentz exterior product defined through $\langle \nu,
w\rangle=\det(\gamma,\gamma',\gamma'',\gamma''',w),\ \forall
w\in\mathbb{R}_1^4$ (cf. \cite{langevin.ohara}). Therefore, if
$\Gamma(t)$ is a smooth vertex-free curve, the osculating spheres
give a differentiable path $\sigma(t)$ in $\Lambda^4$ possibly
with $\sigma'(t)=0$ at some point. 

{The following fact was probably known by the time of \cite{Tho}.}
\begin{proposition}\label{osculating}
Let $\Gamma(t)\subset\mathbb{S}^3$ be  a vertex-free curve, and let
$\sigma(t)\in\Lambda^4$ correspond to the osculating spheres $\Sigma(t)$. At 
points where $\sigma'(t)\neq 0$, the geodesic curvature vector
$\vec{k}_g(t)$ is light-like. Moreover, $\vec{k}_g(t)\neq 0$, and
$\Gamma(t)=\spa(\vec{k}_g(t))$. The osculating circles of $\Gamma(t)$ are the characteristic circles of the canal $\Sigma(t)$ given by
$C(t)=\spa(\sigma,\dot{\sigma})^{\bot}\cap\mathcal C$.
\end{proposition}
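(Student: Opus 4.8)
The plan is to work with the arc-length parameter $s$ of $\sigma$ (available wherever $\sigma'\neq0$) and to differentiate the defining relations of the osculating sphere. By Lemma \ref{contacte} these are $\esc{\sigma}{\gamma}=\esc{\sigma}{\dot\gamma}=\esc{\sigma}{\ddot\gamma}=\esc{\sigma}{\dddot\gamma}=0$, and they hold for any section and parametrization of $\gamma$ since the osculating flag $\spa(\gamma,\dot\gamma,\dots)$ is invariant under both. Because $\Gamma$ is vertex-free, $\gamma,\dot\gamma,\ddot\gamma,\dddot\gamma$ are independent, and $\sigma$ cannot lie in their span (otherwise $\esc{\sigma}{\sigma}=0$); hence $\{\gamma,\dot\gamma,\ddot\gamma,\dddot\gamma,\sigma\}$ is a basis of $\mathbb R^5$. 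Differentiating the four relations once, each time cancelling one term with the next relation, I obtain $\esc{\dot\sigma}{\gamma}=\esc{\dot\sigma}{\dot\gamma}=\esc{\dot\sigma}{\ddot\gamma}=0$; together with $\esc{\dot\sigma}{\sigma}=0$ this says $\dot\sigma\bot\spa(\sigma,\gamma,\dot\gamma,\ddot\gamma)$.

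This settles the statement about circles almost immediately. Since $\sigma$ and $\dot\sigma$ are both orthogonal to $\spa(\gamma,\dot\gamma,\ddot\gamma)$, and both $\spa(\sigma,\dot\sigma)$ (using $\dot\sigma\neq0$) and $\spa(\gamma,\dot\gamma,\ddot\gamma)^\bot$ are two-dimensional, the two subspaces coincide. Taking orthogonals yields $C(s)=\spa(\sigma,\dot\sigma)^\bot\cap\mathcal C=\spa(\gamma,\dot\gamma,\ddot\gamma)\cap\mathcal C$, which is exactly the osculating circle of $\Gamma$.

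Next I would pin down $\vec{k}_g=\sigma+\ddot\sigma$. One checks (using $\esc{\ddot\sigma}{\sigma}=-1$ and $\esc{\ddot\sigma}{\dot\sigma}=0$) that $\esc{\vec{k}_g}{\sigma}=\esc{\vec{k}_g}{\dot\sigma}=0$, so $\vec{k}_g\bot\spa(\sigma,\dot\sigma)$, i.e. $\vec{k}_g\in\spa(\gamma,\dot\gamma,\ddot\gamma)$. Differentiating the relations a second time gives $\esc{\ddot\sigma}{\gamma}=\esc{\ddot\sigma}{\dot\gamma}=0$, hence $\esc{\vec{k}_g}{\gamma}=\esc{\vec{k}_g}{\dot\gamma}=0$. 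Writing $\vec{k}_g=p\gamma+q\dot\gamma+r\ddot\gamma$ and using the null-curve Gram relations — for the standard section $\gamma=(m,1)$ one has $\esc{\gamma}{\gamma}=\esc{\gamma}{\dot\gamma}=0$ and $\esc{\gamma}{\ddot\gamma}=-E$ with $E:=\esc{\dot\gamma}{\dot\gamma}=|\dot m|^2>0$ for an immersed curve — the condition $\esc{\vec{k}_g}{\gamma}=0$ forces $r=0$, and then $\esc{\vec{k}_g}{\dot\gamma}=0$ forces $q=0$. Thus $\vec{k}_g=p\gamma$; as $\gamma$ is null this is light-like, and $\Gamma=\spa(\gamma)=\spa(\vec{k}_g)$ provided $p\neq0$.

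Finally, to see that $p\neq0$ exactly where $\sigma'\neq0$, I would pair $\vec{k}_g=p\gamma$ with $\ddot\gamma$, obtaining $-pE=\esc{\ddot\sigma}{\ddot\gamma}$; a short differentiation chain gives $\esc{\ddot\sigma}{\ddot\gamma}=\esc{\sigma}{\gamma^{4)}}$, so $p=-\esc{\sigma}{\gamma^{4)}}/E$. On the other hand $\dot\sigma$ is orthogonal to the hyperplane $\spa(\sigma,\gamma,\dot\gamma,\ddot\gamma)$, hence a scalar multiple of the unique vector orthogonal to that hyperplane; this vector cannot also be orthogonal to $\dddot\gamma$ (otherwise it would annihilate the whole basis), so $\dot\sigma=0\iff\esc{\dot\sigma}{\dddot\gamma}=0\iff\esc{\sigma}{\gamma^{4)}}=0$, i.e. $\sigma'$ vanishes precisely at spherical points. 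Therefore, wherever $\sigma'\neq0$ we have $\esc{\sigma}{\gamma^{4)}}\neq0$, so $p\neq0$ and $\vec{k}_g\neq0$. The only genuinely delicate points are justifying that the osculating relations are independent of the chosen section (so that $\gamma=(m,1)$ is legitimate) and that $E\neq0$; once these are in place the parallelism $\vec{k}_g\parallel\gamma$ is pure linear algebra forced by nondegeneracy of $\esc{\ }{\ }$ and the vertex-free hypothesis, so I do not anticipate a serious obstacle beyond keeping the differentiated relations straight.
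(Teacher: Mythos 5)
Your proof is correct, and its first half --- differentiating the contact relations to get $\dot\sigma\bot\spa(\gamma,\dot\gamma,\ddot\gamma)$, hence $\spa(\sigma,\dot\sigma)=\spa(\gamma,\dot\gamma,\ddot\gamma)^\bot$, which identifies the characteristic circles with the osculating circles --- is the same as the paper's. Where you genuinely diverge is in locating $\vec{k}_g$. The paper observes that $\sigma$ and $\ddot\sigma$ both lie in the \emph{two}-dimensional space $\spa(\gamma,\dot\gamma,\dot\sigma)^\bot$, which also contains $\gamma$; it rules out $\ddot\sigma=-\sigma$ by showing this would make $\dot\sigma$ a second osculating sphere (contradicting vertex-freeness), and then solves $\esc{\sigma+\beta\ddot\sigma}{\sigma}=0$ to get $\beta=1$ and conclude $\vec{k}_g\parallel\gamma$. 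You instead place $\vec{k}_g$ in the \emph{three}-dimensional space $\spa(\gamma,\dot\gamma,\ddot\gamma)$, note it is orthogonal to $\gamma$ and $\dot\gamma$, and let the degenerate Gram matrix of the null lift ($\esc{\gamma}{\gamma}=\esc{\gamma}{\dot\gamma}=0$, $\esc{\gamma}{\ddot\gamma}=-E$) kill the $\ddot\gamma$- and $\dot\gamma$-components. Your non-vanishing argument is also different: rather than the paper's contradiction, you compute $p=-\esc{\sigma}{\gamma^{4)}}/E$ and show $\esc{\sigma}{\gamma^{4)}}\neq 0$ wherever $\dot\sigma\neq 0$ by a dimension count in the basis $\{\gamma,\dot\gamma,\ddot\gamma,\dddot\gamma,\sigma\}$. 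This costs more bookkeeping but yields Proposition \ref{fialkow} ($\sigma'=0$ exactly at spherical points) as a byproduct, which the paper proves separately. All the delicate steps check out: the identity $\esc{\ddot\sigma}{\ddot\gamma}=\esc{\sigma}{\gamma^{4)}}$ follows from the differentiation chain, and $E=\esc{\dot\gamma}{\dot\gamma}>0$ for an immersed curve holds for \emph{any} null lift and regular parameter (the induced form on $\gamma^\bot/\spa(\gamma)$ is positive definite), so you do not even need to single out the section $\gamma=(m,1)$.
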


\begin{proof} Without loss of generality we can consider  $\sigma(s)$ parametrized by arc-length. From
$\sigma\bot\spa(\gamma,\dot{\gamma},\ddot{\gamma},\dddot{\gamma})$ one gets easily $\dot{\sigma}\bot\spa(\gamma,\dot{\gamma},\ddot{\gamma})$,
and $\ddot{\sigma}\bot\spa(\gamma,\dot{\gamma})$. Thus,
$\sigma,\ddot{\sigma}$ belong to
$\spa(\gamma,\dot{\gamma},\dot{\sigma})^\bot$ which has dimension
$2$. Note that $\esc{\sigma}{\ddot\sigma}=-1$. If $\ddot{\sigma}(s)= -\sigma(s)$ at some point, we argue
\[
\esc{\dot{\sigma}}{\ddot{\gamma}}\equiv 0\Rightarrow
\esc{\ddot{\sigma}}{\ddot{\gamma}}+\esc{\dot{\sigma}}{\dddot{\gamma}}=\cancel{\esc{-\sigma}{\ddot{\gamma}}}+\esc{\dot{\sigma}}{\dddot{\gamma}}=0
\]
and
$\dot{\sigma}\bot\spa(\gamma,\dot{\gamma},\ddot{\gamma},\dddot{\gamma})$
giving a second osculating sphere for $\Gamma(t)$. Thus, at
non-vertex points, the vectors $\ddot{\sigma},\sigma$ are linearly independent,
and there must exist  $\beta$ such that
$\sigma+\beta\ddot{\sigma}$ is parallel to $\gamma$. Now
\[
\esc{\sigma+\beta\ddot\sigma}{\sigma}=0\Leftrightarrow \beta=1.
\]Therefore $\vec{k}_g=\sigma+\ddot{\sigma}$ is light-like. Hence $\vec k_g\in \gamma^\bot\cap \mathcal C=\mathrm{span}(\gamma)$, so that $\Gamma(s)=\mathrm{span}(\vec k_g(s))$. Finally, since $\sigma,\dot \sigma\bot\gamma,\dot \gamma,\ddot\gamma$, the circle $C=\mathrm{span}(\sigma,\dot\sigma)^\bot\cap \mathcal C$ has second order contact with $\Gamma$. 
\end{proof}

Roughly speaking we say that the osculating spheres of space curves form a drill. In fact, the converse is essentially true: a drill  with $\vec k_g(t)\neq 0$ corresponds to the oculating spheres of the curve $\Gamma(t)=\mathrm{span}(\vec k_g(t))$. This fact was already observed by Thomsen \cite{Tho}.

The following is an interesting consequence of Proposition \ref{osculating}.
\begin{coro}[] \emph{(cf.\cite{banchoffwhite})}
The osculating circles $C(t)$ of a vertex-free curve $\Gamma(t)$
without spherical points generate a surface $\cup_t C(t)$ (called
{\em curvature tube}) that is immersed outside $\Gamma$.
\end{coro}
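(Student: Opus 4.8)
The statement I need to prove is that the osculating circles $C(t)$ of a vertex-free curve $\Gamma(t)$ without spherical points sweep out a surface $\cup_t C(t)$ that is immersed away from $\Gamma$ itself. Let me think about what tools are available and what the geometry is telling me.

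From Proposition~\ref{osculating}, the osculating spheres give a path $\sigma(t)$ in $\Lambda^4$, and the osculating circles are exactly the characteristic circles $C(t)=\spa(\sigma,\dot\sigma)^\bot\cap\mathcal C$. So the surface $\cup_t C(t)$ is precisely the envelope (canal) associated to the path $\sigma(t)$. The natural strategy is therefore to verify that this canal is \emph{regular} in the sense of Definition of regular canal, at least away from $\Gamma$, and then invoke the existing characterization of regularity.

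Let me check the hypotheses. The curve is vertex-free, so $\sigma(t)$ is a well-defined differentiable path. The key second-order condition is Proposition~\ref{condition}: a canal is regular iff $\spa(\sigma,\dot\sigma,\ddot\sigma)$ meets the light cone transversely, equivalently (by the computation relating $\vec k_g$ to this span) iff the geodesic curvature vector $\vec k_g=\sigma+\ddot\sigma$ is \emph{not} space-like and the span is $3$-dimensional. But Proposition~\ref{osculating} tells me that at points where $\sigma'(t)\ne 0$, the vector $\vec k_g(t)$ is light-like and nonzero — so the canal is in fact a \emph{drill}, not a generic regular canal. A drill is singular precisely \emph{along} its envelope curve, and here $\Gamma(t)=\spa(\vec k_g(t))$ is exactly that curve. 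This matches the phrasing ``immersed outside $\Gamma$.''

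So here is the plan. First, I restrict attention to points where $\sigma'(t)\ne 0$; where $\sigma'(t)=0$ the characteristic circle is not even well defined by the formula, but these correspond exactly to spherical points, which are excluded by hypothesis — so $\sigma'(t)\ne 0$ throughout. Second, I recall that a drill's envelope has its two characteristic-circle-envelope curves coinciding along a single curve, which is $\Gamma$ itself. Away from that degenerate locus, I need the parametrization $f(t,\cdot)$ of the circles $C(t)$ to be an immersion. Concretely, I parametrize the light-cone representatives $\gamma_\theta(t)$ of points on $C(t)$ and compute the rank of the differential of the map $(t,\theta)\mapsto\spa(\gamma_\theta(t))$; I must show the Jacobian drops rank only along $\Gamma$. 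The mechanism is that the light-like direction $\vec k_g(t)=\sigma+\ddot\sigma$ singles out the one point $\Gamma(t)$ on $C(t)$ at which the tangent developable degenerates — this is the classical envelope point — while at all other points of $C(t)$ the family moves transversely.

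The step I expect to be the main obstacle is the rank computation: I must show cleanly that the differential of the circle-sweeping map degenerates \emph{only} at the single point $\spa(\vec k_g(t))=\Gamma(t)$ on each circle, and nowhere else. The light-likeness of $\vec k_g$ is what makes the two envelope branches merge into one (a drill) rather than separate into two singular curves, so I must use the relation $\langle\vec k_g,\vec k_g\rangle=0$ together with $\vec k_g\ne 0$ to pin down that the failure of immersivity is confined to $\Gamma$. Once that local rank statement is in hand, the Corollary follows immediately from Proposition~\ref{osculating} by identifying $\Gamma(t)=\spa(\vec k_g(t))$ as the singular locus and noting everything is smooth there because the curve is vertex-free and free of spherical points.
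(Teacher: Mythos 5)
The paper states this corollary without printing a proof (it is offered as a direct consequence of Proposition \ref{osculating}), so the comparison is with the argument the paper clearly intends, namely the identification of the singular locus of the swept surface with the envelope of the characteristic circles. Your setup is exactly right and uses the correct ingredients: Proposition \ref{fialkow} to get $\sigma'(t)\neq 0$ everywhere (no spherical points), and Proposition \ref{osculating} to get that the osculating circles are the characteristic circles of the path $\sigma(t)$ and that $\vec{k}_g=\sigma+\ddot\sigma$ is light-like, non-zero, with $\spa(\vec{k}_g(t))=\Gamma(t)$. However, the one step that actually proves the statement --- your ``rank computation'' showing the sweeping map fails to be an immersion only at $\Gamma(t)$ --- is announced as the main obstacle and then never carried out. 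As written, the proposal is a correct plan, not a proof: nothing in it yet rules out the differential degenerating at some other point of a circle $C(t)$.

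The missing step is short and you should supply it. Lift a point of $C(t)$ to $\gamma\in\mathcal C$ with $\gamma\bot\sigma,\dot\sigma$; differentiating these relations along a curve $\gamma(t)\in C(t)$ (as in the proof of Proposition \ref{nested}) shows that the $t$-derivative of the sweeping map is tangent to the circle $C(t)$ exactly when, in addition, $\esc{\gamma}{\ddot\sigma}=0$. Hence the non-immersed locus over time $t$ is $\spa(\sigma,\dot\sigma,\ddot\sigma)^\bot\cap\mathcal C=\spa(\sigma,\dot\sigma,\vec{k}_g)^\bot\cap\mathcal C$. Since $\vec{k}_g$ is orthogonal to both $\sigma$ and $\dot\sigma$ (arc-length parametrization), light-like and non-zero, the $3$-plane $\spa(\sigma,\dot\sigma,\vec{k}_g)$ is degenerate with radical $\spa(\vec{k}_g)$, and its orthogonal complement is spanned by $\vec{k}_g$ together with a space-like vector; it therefore meets $\mathcal C\setminus\{0\}$ exactly in the null line $\spa(\vec{k}_g(t))=\Gamma(t)$. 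So each circle carries exactly one singular point, namely $\Gamma(t)$, which is the assertion. (A minor side remark: you paraphrase Proposition \ref{condition} as ``regular iff $\vec{k}_g$ is not space-like''; transversality of $\spa(\sigma,\dot\sigma,\ddot\sigma)$ with the light cone requires $\vec{k}_g$ \emph{time-like} --- the light-like case is precisely the tangential, drill, case --- but this slip does not affect your argument since you immediately treat the drill case.)
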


\begin{proposition}{\em (cf.\cite[Thm.12.1]{fialkow})}\label{fialkow}
Let $\Gamma(t)=\spa(\gamma(t))$ be a vertex-free curve, and
$\sigma(t)$ its osculating canal. Then $\sigma'(t)=0$ exactly at
spherical points of $\Gamma(t)$ (i.e., where $\Sigma(t)$ has
contact bigger than 3 with $\Gamma(t)$).
\end{proposition}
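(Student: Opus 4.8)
The plan is to reduce everything to linear-algebra relations among $\gamma$ and its derivatives, in the spirit of the proof of Proposition \ref{osculating}. Throughout I fix a lift $\gamma(t)$ and use that $\sigma(t)$ is the differentiable osculating canal; vertex-freeness means $\gamma,\gamma',\gamma'',\gamma'''$ are linearly independent, while the osculating sphere satisfies $\sigma\bot\spa(\gamma,\gamma',\gamma'',\gamma''')$ and $\esc{\sigma}{\sigma}=1$.

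First I would differentiate the four identities $\esc{\sigma}{\gamma^{k)}}\equiv 0$ for $k=0,1,2,3$. Cancelling the term $\esc{\sigma}{\gamma^{k+1)}}$ against the next identity gives $\esc{\sigma'}{\gamma}=\esc{\sigma'}{\gamma'}=\esc{\sigma'}{\gamma''}=0$, together with the crucial relation $\esc{\sigma'}{\gamma'''}=-\esc{\sigma}{\gamma^{4)}}$ (here the cancellation fails, which is exactly what detects spherical points). Adjoining the standard $\esc{\sigma'}{\sigma}=0$ coming from $\esc{\sigma}{\sigma}\equiv 1$, I conclude that $\sigma'$ is orthogonal to the subspace $V=\spa(\gamma,\gamma',\gamma'',\sigma)$.

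Next I would verify that $V$ is $4$-dimensional and that $\gamma'''$ is transverse to it, so that $\{\gamma,\gamma',\gamma'',\sigma,\gamma'''\}$ is a basis of $\mathbb R_1^4$. Indeed $\sigma\notin\spa(\gamma,\gamma',\gamma'')$, for otherwise $\esc{\sigma}{\sigma}=0$ would follow; and $\gamma'''\notin V$, since writing $\gamma'''=a\gamma+b\gamma'+c\gamma''+d\sigma$ and pairing with $\sigma$ forces $d=0$, contradicting vertex-freeness. Consequently $V^\bot$ is a line $\spa(n)$, and $\esc{n}{\gamma'''}\neq 0$, since otherwise $n$ would be orthogonal to the entire basis and hence zero by nondegeneracy. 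Writing $\sigma'=\lambda n$, I get that $\sigma'=0$ iff $\lambda=0$ iff $\esc{\sigma'}{\gamma'''}=0$, that is, by the relation above, iff $\esc{\sigma}{\gamma^{4)}}=0$.

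Finally, the condition $\esc{\sigma}{\gamma^{4)}}=0$ together with $\sigma\bot\spa(\gamma,\gamma',\gamma'',\gamma''')$ is precisely $\sigma\bot\spa(\gamma,\gamma',\gamma'',\gamma''',\gamma^{4)})$, which by Lemma \ref{contacte} means $\Gamma(t)$ has contact of order $\geq 4$ with $\Sigma(t)$, i.e.\ $t$ is a spherical point; this closes the equivalence. I expect the only delicate point to be the dimension bookkeeping of the third paragraph: one must invoke both the vertex-free hypothesis and the space-like normalization of $\sigma$ to ensure that $V$ is genuinely $4$-dimensional and that $\gamma'''$ escapes it, for only then is $\sigma'$ confined to a line on which the pairing with $\gamma'''$ is nonvanishing.
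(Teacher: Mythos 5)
Your proposal is correct and follows essentially the same route as the paper: differentiate the orthogonality relations $\esc{\sigma}{\gamma^{k)}}\equiv 0$ to transfer them to $\sigma'$, and use vertex-freeness for a dimension count that pins $\sigma'$ to a line. The only difference is organizational: you confine $\sigma'$ once and for all to the line $\spa(\gamma,\gamma',\gamma'',\sigma)^\bot$, on which pairing with $\gamma'''$ is nondegenerate, whereas the paper treats the two implications separately, using in the converse that $\spa(\gamma,\gamma',\gamma'',\gamma''')^\bot=\spa(\sigma)$ together with $\sigma'\bot\sigma$.
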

\begin{proof}
Suppose $\sigma'(t)=0$ for some $t$. Then
\[
\langle \sigma,{\gamma}'''\rangle\equiv 0\Rightarrow
\cancel{\esc{\sigma'(t)}{\gamma'''(t)}}+\esc{{\sigma}(t)}{\gamma^{4)}(t)}=0
\]
so that $\Sigma$ has contact 4 with $\Gamma$. Conversely, if at
some point we have $\sigma\bot\spa(\gamma,\ldots,\gamma^{4)})$
then we get
$\sigma,\sigma'\bot\spa(\gamma,\gamma'\gamma'',\gamma''')$. The
latter space has dimension 4 since $\Gamma$
is vertex-free. Thus $\sigma'$ is parallel to $\sigma$ which yields $\sigma'=0$
since $\sigma'\bot\sigma$.
\end{proof}


The following result is not directly related with our purposes, but may have some interest as a generalisation of the well-known fact that the osculating circles of a planar vertex-free curve are nested (cf.\cite{kneser}). 
\begin{coro}
Given  a sphere $\overline{\Sigma}$ having second contact with a
vertex-free curve $\Gamma(s)$ at $\Gamma(s_0)$, there is a light-like path
$\overline{\Sigma}(s)$ with $\overline{\Sigma}=\overline{\Sigma}(s_0)$, and with contact 2
along $\Gamma(s)$ for $s$ close to $s_0$.
\end{coro}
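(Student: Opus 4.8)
The plan is to produce the required family as the unique null curve sitting inside the moving pencil of spheres that have contact $2$ with $\Gamma$. Write $\Gamma(s)=\spa(\gamma(s))$ and put $W(s):=\spa(\gamma,\gamma',\gamma'')^{\bot}$. By Lemma \ref{contacte} and the remark following it, a sphere $\mu$ has contact $\ge 2$ with $\Gamma$ at $\Gamma(s)$ exactly when $\mu\bot\spa(\gamma,\gamma',\gamma'')$, i.e. $\mu\in W(s)$. Since $\gamma,\gamma',\gamma''$ are independent for a vertex-free curve, $W(s)$ is a smooth $2$-plane; it is space-like, being the pencil of spheres through the real osculating circle $\spa(\gamma,\gamma',\gamma'')\cap\mathcal C$ (equivalently $W=\spa(\sigma,\dot\sigma)$ at non-spherical points, by the proof of Proposition \ref{osculating}). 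So I look for a path $\overline\sigma(s)\in W(s)\cap\Lambda^4$ of the form $\overline\sigma(s)=\cos\theta(s)\,e_1(s)+\sin\theta(s)\,e_2(s)$ in a smooth orthonormal frame $e_1,e_2$ of $W(s)$, with $\theta(s_0)$ chosen so that $\overline\sigma(s_0)=\overline\sigma$, subject to the requirement that the path be light-like.

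The crux is a transversality identity, which is the analogue here of the ``developable'' structure of a drill. For \emph{any} smooth unit section $\mu(s)\in W(s)\cap\Lambda^4$ one has $\langle\mu,\gamma\rangle\equiv\langle\mu,\gamma'\rangle\equiv\langle\mu,\gamma''\rangle\equiv 0$; differentiating the first two and using the other relations gives $\langle\mu',\gamma\rangle=\langle\mu',\gamma'\rangle=0$. Hence the component of $\mu'$ in $W^{\bot}=\spa(\gamma,\gamma',\gamma'')$ lies in $\spa(\gamma,\gamma',\gamma'')\cap\gamma^{\bot}\cap\gamma'^{\bot}$. Because $\gamma$ is null while $\langle\gamma',\gamma'\rangle>0$ (the curve is regular, so its displacement is space-like), a rank computation shows this intersection is exactly the null line $\spa(\gamma)$. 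Thus the $W^{\bot}$-part of the velocity of \emph{every} contact-$2$ family of spheres is a multiple of the light-like vector $\gamma=\vec{k}_g$; in particular its square-norm vanishes identically.

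I then conclude at once. Since $W$ is positive definite, $\langle\overline\sigma',\overline\sigma'\rangle=\|\overline\sigma'|_{W}\|^{2}+\|\overline\sigma'|_{W^{\bot}}\|^{2}=\|\overline\sigma'|_{W}\|^{2}$ by the previous step, so $\overline\sigma$ is light-like if and only if $\overline\sigma'|_{W}=0$. Writing $\overline\sigma'|_{W}=(\theta'+\omega)(-\sin\theta\,e_1+\cos\theta\,e_2)$ with $\omega=\langle e_1',e_2\rangle$, this becomes the linear ODE $\theta'=-\omega$, which has a unique solution with the prescribed initial value. The resulting $\overline\sigma(s)$ stays in $W(s)$, hence keeps contact $2$ with $\Gamma$; its tangent is a multiple of $\gamma=\vec{k}_g$, so it is light-like; and it passes through $\overline\sigma$ at $s_0$. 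By Proposition \ref{nested} the corresponding spheres are moreover disjoint wherever $\spa(\overline\sigma')$ varies, which is the promised generalisation of the nesting of osculating circles. In the arc-length frame $e_1=\sigma,\ e_2=\dot\sigma$ one has $\omega\equiv 1$, so $\theta'=-1$ and the solution is exactly $\overline\sigma(s)=\phi_t(s)$ of \eqref{canal}, the involute of the osculating canal.

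The main obstacle is the transversality identity of the second paragraph (that every contact-$2$ velocity is transversally null); everything after it is a single linear ODE plus bookkeeping. I would stress that this argument uses only $\gamma,\gamma',\gamma''$ and never the osculating canal's own (degenerate) arc-length parameter, so it remains valid at spherical points, where $\sigma'=0$ by Proposition \ref{fialkow} and the naive involute construction via $\phi_t$ would otherwise break down.
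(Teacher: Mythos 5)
Your proof is correct, and it takes a genuinely different route from the paper's. The paper constructs the family explicitly as an involute $\phi_t(s)=\cos(s+t)\,\sigma(s)-\sin(s+t)\,\dot\sigma(s)$ of the osculating drill $\sigma(s)$, picks $t$ so that $\phi_t(s_0)$ corresponds to $\overline\Sigma$, observes via \eqref{derivada} and Proposition \ref{osculating} that $\phi_t'(s)$ is a multiple of the light-like vector $\vec{k}_g(s)$ spanning $\Gamma(s)$, and then invokes --- without proof --- the auxiliary fact that a light-like path has second order contact with the curve traced by its tangent direction. You instead work directly inside the moving pencil $W(s)=\spa(\gamma,\gamma',\gamma'')^\bot$ of contact-$2$ spheres, so the contact-$2$ property is automatic from Lemma \ref{contacte}, and the only thing left to arrange is light-likeness; your transversality identity (the $W^\bot$-component of the velocity of any section of $W(s)\cap\Lambda^4$ lies in $\spa(\gamma)$) reduces this to the scalar linear ODE $\theta'=-\omega$. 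The two constructions agree --- in the frame $e_1=\sigma$, $e_2=\dot\sigma$ your solution is exactly $\phi_t$ --- but your version buys two things: it avoids the unproved auxiliary claim, and it never uses the arc-length parametrization of the drill, which degenerates at spherical points (where $\sigma'=0$ by Proposition \ref{fialkow}) even though the corollary as stated does not exclude them. Two small blemishes: the quantity $\|\overline\sigma'|_{W^\bot}\|^2$ should be written $\esc{\overline\sigma'|_{W^\bot}}{\overline\sigma'|_{W^\bot}}$, since $W^\bot$ is not space-like and the paper only defines $\|\cdot\|$ for space-like vectors; and your path may have $\overline\sigma'=0$ at isolated points, which is still ``light-like'' under the paper's definition but worth flagging.
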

\begin{proof}Let $\sigma(s)$ be the drill formed by the osculating spheres of $\Gamma(s)$, and consider the canal $\phi_t(s)$ defined in \eqref{canal}. Then we can choose $t$ such that $\phi_t(s_0)$ corresponds to $\overline\Sigma$. Then, equation \eqref{derivada} shows $\spa(\phi_t'(s))=\spa(\sigma(s)+\ddot\sigma(s))=\Gamma(s)$.
Then $\overline{\Sigma}(s)$ is given by $\phi_t(s)$, as one can show a light-like path has always second order contact with the  curve defined by its tangent vector.
\end{proof}

\begin{remark}
The  curves on a given surface such that the osculating sphere is
tangent to the surface were studied by Darboux (cf.\cite{Da}). In particular, he showed they satisfy a (reasonably) nice second order differential equation; at the end of the article he mentions the use of
pentaspherical coordinates as a way to obtain geometrical properties
of these curves. 
\end{remark}

\section{Conformal torsion}
In \cite{sanabria}, a conformally invariant form $\omega$  was
obtained on any vertex-free curve  by pulling back the arc-length
element $\mathrm{d}s$ of the osculating canal $\sigma(s)$. This
form $\omega$ can be expressed in terms of the so-called conformal
torsion and the conformal arc-length. After a stereographic
projection, the computations in \cite{sanabria} give
\[
\omega=\sigma^*(\mathrm{d}
s)=\frac{\sqrt{|m'(u)|^2-r'(u)}}{r(u)}\mathrm{d}u=\frac{|2k'\tau+k^2\tau^3+kk'\tau'-kk''\tau|}{k'^2+k^2\tau^2}\mathrm{d}u
\]
where $m(u), r(u)$ are respectively the center and radius of the
osculating sphere of the projected curve, and derivation is taken
with respect to the arc-length parameter $u$ of this curve. On the
other hand, every vertex-free curve admits a conformally invariant
parameter $t$, called {\em conformal arc-length}, given by
\[
\mathrm{d} t=\sqrt[4]{k'^2+k^2\tau^2}\mathrm{d}u.
\]
Now,
\[
\omega=|T|\mathrm{d}t\qquad \mathrm{where}\qquad
T=\frac{2k'\tau+k^2\tau^3+kk'\tau'-kk''\tau}{(k'^2+k^2\tau^2)^{5/2}}.
\]
The scalar $T$ is a conformally invariant function associated to a vertex-free 
curve and was called {\em conformal torsion} in \cite{csw}. Note that $T$ has a well defined sign, and it vanishes at spherical points by Proposition \ref{fialkow}.
Thus, unlike vertices, spherical points can not be avoided generically. The following result is an application of Theorem \ref{main}.

%
%


\begin{coro}
Let $C\subset\mathbb{R}^3$ be a closed vertex-free curve. If no
sphere has contact $4$ (or higher) with $C$ then
\[
\int_C \omega= \left|\int_C T(t)\mathrm{d}t\right|\geq 2\pi
\]
where $T(\neq 0)$ is the conformal torsion and $t$ denotes the
conformal arc-length.
\end{coro}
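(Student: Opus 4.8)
The plan is to recognise $\int_C\omega$ as the length in $\Lambda^4$ of the osculating canal of $C$ and then to invoke Theorem~\ref{main}. Write $\sigma(t)$ for the osculating canal of the curve $\Gamma\subset\mathbb S^3$ corresponding to $C$ under stereographic projection. By its very definition $\omega=\sigma^*(\mathrm ds)$, where $\mathrm ds$ is the arc-length element of $\sigma$ in $\Lambda^4$; hence, integrating over the closed curve, $\int_C\omega$ equals the total length $\ell$ of $\sigma$.

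The key step is to verify that $\sigma$ is a \emph{closed almost-regular canal}, so that Theorem~\ref{main} applies. Since $\Gamma$ is vertex-free, $\sigma(t)$ is a well-defined differentiable path; by Proposition~\ref{osculating} its geodesic curvature vector $\vec{k}_g(t)$ is light-like and nowhere zero at points where $\sigma'(t)\neq 0$, so there it is a drill. By Proposition~\ref{fialkow}, the vanishing $\sigma'(t)=0$ occurs exactly at the spherical points of $\Gamma$, i.e. where some sphere has contact $4$ or higher. By hypothesis there are none, so $\sigma'(t)\neq 0$ for all $t$; together with the closedness of $\Gamma$ this exhibits $\sigma$ as a closed drill with non-vanishing curvature vector, which is in particular almost regular. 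Theorem~\ref{main} then gives $\ell\geq 2\pi$, and combined with the previous paragraph $\int_C\omega\geq 2\pi$.

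It remains to justify the equality $\int_C\omega=\bigl|\int_C T(t)\,\mathrm dt\bigr|$. Since $\omega=|T|\,\mathrm dt$, this reduces to showing that $T$ keeps a constant sign along $C$. By Proposition~\ref{fialkow} and the discussion following it, $T$ vanishes precisely at the spherical points; as these are excluded by hypothesis, $T$ is continuous and nowhere zero on the connected curve $C\cong\mathbb S^1$, and is therefore of constant sign. Consequently $\int_C|T|\,\mathrm dt=\bigl|\int_C T\,\mathrm dt\bigr|$, and the chain $\int_C\omega=\bigl|\int_C T\,\mathrm dt\bigr|\geq 2\pi$ follows.

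I do not expect a serious obstacle, since the corollary is essentially a translation of Theorem~\ref{main}. The only points requiring genuine care are the verification that the osculating canal truly meets the hypotheses of the theorem, namely the non-vanishing of $\sigma'$ (via Proposition~\ref{fialkow}) and the light-like, non-zero curvature vector (via Proposition~\ref{osculating}), and the constant-sign argument for $T$ that allows one to replace $\int_C|T|$ by $\bigl|\int_C T\bigr|$.
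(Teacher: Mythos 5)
Your reduction to Theorem~\ref{main} is the right strategy, and the routine verifications are done correctly: $\int_C\omega$ is the length of the osculating canal by definition of $\omega$; Proposition~\ref{fialkow} plus the no-fourth-order-contact hypothesis gives $\sigma'(t)\neq 0$ everywhere; Proposition~\ref{osculating} then makes $\sigma$ a drill with nowhere-vanishing light-like $\vec k_g$, hence almost regular; and the constant-sign argument for $T$ is fine. However, you have skipped the one point that is not automatic, and which is in fact the entire content of the paper's proof: the closedness of $\sigma$ as a curve in $\Lambda^4$. The osculating sphere is only canonically defined as an \emph{unoriented} sphere, $\pm\sigma=\pm\nu/\sqrt{\langle\nu,\nu\rangle}$; choosing the sign continuously along $\Gamma$ starting at $t=0$, one could a priori return after one loop to $-\sigma(0)$ rather than $\sigma(0)$. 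In that case $\sigma$ would only close up in $\Lambda^4/\{\pm1\}$, Theorem~\ref{main} would not apply to the single loop, and traversing twice would only yield $\ell\geq\pi$. Writing ``together with the closedness of $\Gamma$ this exhibits $\sigma$ as a closed drill'' assumes exactly what needs to be proved.

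The paper closes this gap by orienting the spheres coherently: the osculating circles sweep out the curvature tube $S=\cup_t C(t)$, which is immersed outside $C$; since $C$ and the osculating circles are oriented, one can fix an orientation on $S\setminus C$ and use it to orient all the osculating spheres consistently, forcing $\sigma(\ell)=\sigma(0)$. You should add an argument of this kind (or some other monodromy computation showing the sign does not flip) before invoking Theorem~\ref{main}.
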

\begin{proof}
It remains only to see that the orientations of the osculating
spheres can be taken in a coherent way; so that
$\sigma(\ell)=\sigma(0)$ and not $-\sigma(0)$. Recall that the
osculating spheres form a drill with an envelope $S$ generated by
the osculating circles. This envelope $S$ is an immersed surface
outside the curve $C$. Since $C$, and the osculating circles are oriented,  we can take an orientation on
$S\setminus C$, and push this orientation coherently to the
spheres.
\end{proof}

The integral of $T(t)\mathrm{d}t$ for closed curves was already
considered in \cite{csw} where they prove that it coincides (mod
$2\pi$) with the total torsion $\int \tau\mathrm{d}u$. The total torsion had been
 shown to be invariant mod $2\pi$ under M\"obius transformations by Banchoff and White in
\cite{banchoffwhite}.

It worth mentioning that closed curves without spherical points exist. One example is given by curves in a Dupin cyclide making a suitably chosen constant angle with the characteristic circles (cf.\cite{yun}).

\bigskip
We end with the following question: is the following inequality
true for general closed vertex-free curves with spherical points
\[
\int_C \omega =\int_C |T(t)|\mathrm{d}t\geq 2\pi?
\]

\bibliographystyle{plain}

\begin{thebibliography}{}

\bibitem[AGV]{arnold}
V.~E.~Arnold, S.M.~Gusein-Zade, A.N.~Varchenko.
\newblock {\em Singularities of Differentiable Maps.}
\newblock Vol {\bf 1}, Birkhauser (1985).

\bibitem[BaWh]{banchoffwhite}
T.~Banchoff, J.~White.
\newblock {\em The behavior of the total twist and self-linking number of a closed space curve under inversions.}
\newblock Math.Scand. \textbf{36} (1975),  254--262.

\bibitem[Ber]{Ber} M.~Berger. {\it G\'{e}om\'{e}trie.} Nathan, Paris (1990).

\bibitem[Cec]{Ce} T.~E.~Cecil. {\it Lie sphere geometry with applications to submanifolds.} Springer (1992).
\bibitem[CSW]{csw}
G.~Cairns, R.~Sharpe, L.~Webb.
\newblock {\em Conformal invariants for curves and surfaces in three
dimensional space forms.}
\newblock { Rocky Mountain Jour.of Math.} \textbf{24} {n. 3},  933--959, 1994.

\bibitem[Da1]{Da} G.~Darboux. {\it Des courbes trac\'{e}es sur une surface, et dont la sph\`{e}re osculatrice
 est tangente en chaque point \`{a} la surface.} C.R. Acad. Sc. Paris (1871)  732--736, (also www.gallica.bnf.fr).

\bibitem[Da2]{Dar} G.~Darboux. {\it Le\c{c}ons sur la th\'{e}orie g\'{e}n\'{e}rale
des surfaces.}  Gauthier-Villars Paris  (1887).

\bibitem[Fia]{fialkow}
A.Fialkow.
\newblock {\em The conformal theory of curves.}
\newblock Trans.Amer.Math.Soc. {\bf 51} (1942),  435--501.

\bibitem[La-Wa]{La-Wa} R.~Langevin, P.~Walczak.
{\it Conformal geometry of foliations.} Geom. Dedicata {\bf 132} (2008), 135--178.


\bibitem[Her]{hertrich}
U.~Hertrich-Jeromin.
\newblock {\em Introduction to M\"{o}bius Differential Geometry.}
\newblock {Cambridge University Press (2003).}
\bibitem[HePi]{pinkall}
U.~Hertrich-Jeromin, U.~Pinkall.
\newblock {\em Ein Beweis der Willmoreschen Vermutung f\"{u}r
Kanaltori.}
\newblock {J. reine angew. Math.} \textbf{430} (1992),  21--34.

\bibitem[Kn]{kneser}
A.~Kneser. {\em Bemerkungen \"{u}ber die Anzahl der Extreme der Kr\"{u}mmung
geschlossene Kurven und \"{u}ber Verwandte Fragen in nichteuklidischen
Geometrie.} Festschrift zum 70. Geburstag von H.Weber (1912),
170--180.

\bibitem[LaOH]{langevin.ohara}
R.~Langevin, J.~O'Hara.
\newblock {\em Conformallly invariant energies of knots.}
\newblock {J. of the Inst. Math. Jussieu} {\bf 4} (2) (2005),  219--280.

\bibitem[RoSa]{sanabria}
M.~C.~Romero-Fuster, E.~Sanabria-Codesal.
\newblock {\em Generalized evolutes, vertices and conformal invariants of curves in $\mathbb{R}^{n+1}$.}
\newblock Indag. Mathem. {\bf 2} n.10 (1999), 297--305.

\bibitem[Sa]{Sa} L.~A.~Santal\'o.
{\em Curvas extremales de la torsion total y curvas-D.} (1941) Publ.
Inst. Mat. Univ. Nac. Litoral. p. 131--156.


\bibitem[Tho]{Tho} G.~Thomsen. {\it \"Uber konforme Geometrie. II: \"uber Kreisscharen und Kurven in der Ebene
und \"uber Kugelscharen und Kurven im Raum,} Abhandlungen Hamburg 4, 117-147 (1925).

\bibitem[Ur]{uribe}
R.~Uribe-Vargas.
\newblock {\it On the higher dimensional four-vertex theorem.}
\newblock {\em Compt. Rend. Acad. Sci. Paris} {\bf 321} n.10, (1995), {1353--1358}.

\bibitem[Yu]{yun} L.~Yun. {\em Invariant of curves under M\"{o}bius group in 3-dimensional space.}
  Peking University (2003).



\end{thebibliography}

\end{document}